\newcommand{\N}{\mathbb N}
\newcommand{\Z}{\mathbb Z}
\newcommand{\C}{\mathbb C}
\newcommand{\Ccc}{\mathbb {C}}
\newcommand{\F}{{\mathbb F}}
\newcommand{\tauu}{{\mathfrak t}}
\DeclareMathOperator{\Id}{Id}
\newcommand{\nc}{\newcommand}
\nc{\BCc}{{\mathbb{C}(\wp(z),\wp^\prime(z))}}
\nc{\BC}{{\mathbb C}}
\nc{\BQ}{{\mathbb Q}}
\nc{\BR}{{\mathbb R}}
\nc{\BZ}{{\mathbb Z}}
\nc{\BP}{{\mathbb P}}
\nc{\BN}{{\mathbb N}}
\nc{\BM}{{\mathbb M}}
\nc{\fH}{{\mathfrak{H}}}
\nc{\vp}{{\varepsilon}}\nc{\dpar}{{\partial}}\nc{\al}{{\alpha}}
\nc{\PSL}{PSL(2,\BR)}
\nc{\PS}{PSL(2,\BZ)}
 \nc{\CL}{PSL(2,\BZ/m\BZ)}
 \newtheorem{numbered}{}
\newtheorem{theorem}[numbered]{Theorem}
\newtheorem{proposition}[numbered]{Proposition}
\newtheorem{definition}[numbered]{Definition}
\newtheorem{lemma}[numbered]{Lemma}
\newtheorem{example}[numbered]{Example}
\theoremstyle{remark}
\newtheorem{remark}[numbered]{Remark}
\begin{document}
\title[Anharmonic solutions and elliptic modular functions]{Anharmonic Solutions to the Riccati equation and elliptic modular functions}
\author{Ahmed Sebbar and Oumar Wone}
\address{Ahmed Sebbar and Oumar Wone\\
Univ. Bordeaux, IMB 
   \\UMR 5251, F-33400 Talence, France.}
    \email{ahmed.sebbar@math.u-bordeaux1.fr}
      \email{oumar.wone@math.u-bordeaux1.fr}
\date{}
\subjclass[2010]{34A26, 34A34, 34C14, 12H05, 11F03, 33E05,12H20, 34M15}
\keywords{Riccati equation, Darboux-Halphen system, Differential Galois theory, Darboux Polynomial, Modular Forms, Cayley's process, Wedekind theorem, Transvectants.}
\begin{abstract}
We study the irreducible and algebraic equation 
\begin{equation}
\label{eq0}
x^n+a_1x^{n-1}+\cdots+a_n=0,
\end{equation}
with $\,n\geqslant4$, over the differential field $(\F=\Ccc(t),\delta=^\prime)$ of rational functions over the field of complex numbers $\C$ with $\delta$ its usual derivation. We assume that a root of the equation is solution of a Riccati differential equation 
$$u^{\prime}+B_{0}+B_{1}u+B_{2}u^{2}=0$$
where $B_{0}$, $B_{1}$, $B_{2}$ are in $\F$.
 \noindent Under this condition we obtain a parameterization (depending on the Galois group) of the equations in terms of a variable $T\in \F$: one can put equation \eqref{eq0} in the form
$$F_{n,H}(x,T)=0$$
with coefficients in $\Ccc$ and with $H$ the Galois group of \eqref{eq0}. The polynomial $F_{n,H}(x,T)$ is universal in the sense that for any subgroup $H$ of $PGL(2,\Ccc)$, there is essentially one such (see the statement of theorem \ref{you}). Our motivation for studying only those Riccati equations which have algebraic solutions of degree $\geqslant4$ is that our main technique of classification uses the cross-ratio. Indeed the cross-ratio of any four solutions of a Riccati equation is constant.  
On the other hand we give an example of a degree $3$ irreducible polynomial equation satisfied by certain weight $2$ modular forms for the subgroup $\Gamma(2)= \{M \in SL(2,\Z), M= \Id\mod 2\}$ of $SL(2,\Z)$, whose roots satisfy a same Riccati equation on the differential field $\left(\C(E_2,E_4,E_6),\dfrac{d}{d\tau}\right)$ with $E_i$ the Eisenstein series of weight $i$ respectively. These solutions are related to a Darboux-Halphen system. Finally our interest is the following problem: for which "potential" $q\in\C(\wp,\wp^\prime)$ does the Riccati equation $\dfrac{dY}{dz}+Y^2=q$ admit algebraic solutions over the differential field $\C(\wp,\wp^\prime)$, $\wp$ being the classical Weirstra§ elliptic function? In this problem, our approach will be through Darboux polynomials and invariant theory. We find that one of the conditions that the minimal polynomial $\Phi(x)$ of $u$ must satisfy is $\tau_4(\Phi)(x)=0$, with $\tau_4(\Phi)(x)$ the fourth transvectant of $\Phi(x)$.
\end{abstract} 
\maketitle
\tableofcontents
 \section{Introduction}
 \label{chapric}
 Let $(\F=\Ccc(t),\delta=')$ be the usual field of rational functions over $\Ccc$ with its usual derivation. We proceed in this paper to the study of some particular properties of the Riccati equation 
 \begin{equation}
\label{eqr90}
u^{\prime}+B_{0}+B_{1}u+B_{2}u^{2}=0, \quad B_{0}, B_{1}, B_{2}\in \F.
\end{equation}
We consider an algebraic irreducible equation $h_n(x)$ of degree $n\geqslant4$ 
 \begin{equation}
 \label{eqr89}
h_n(x)=x^n+a_1x^{n-1}+\cdots+a_n=0,
\end{equation}
 where the $a_i$s belong to the field $\F$. Suppose that a root of $h_n$ is a solution of a Riccati equation. As we are in characteristic zero, every irreducible polynomial is separable and hence has a transitive Galois group $H$. So if one root of equation \eqref{eqr89} satisfies the equation \eqref{eqr90}, one sees that all its roots do because the classical and differential Galois groups coincide (cf. section \ref{one}). Therefore our hypothesis is identical to giving oneself an irreducible separable $h_n(x)$ whose roots satisfy a same Riccati equation of the form \eqref{eqr90}. 
 
 A natural question is what possible implications does the fact that all its roots satisfy \eqref{eqr90} has on the form of equation \eqref{eqr89}?

The cross-ratio of any four solutions $x_i$ of equation \eqref{eqr89} is a constant and we will call such equations, anharmonic equations \cite{pic1877}. Our choice of this terminology, anharmonic, is motivated by its link with the anharmonic Weirstrass $\wp$ function which will appear later in section \ref{drach} (equation \eqref{lame}).
The fundamental property of constance of the cross-ratio leads to interesting consequences; it is indeed this constance which will enable us to effectively find the anharmonic polynomials. In the following we will not distinguish between the $h_n(x)$ \eqref{eqr89} which differ only by the following "functorial" equivalence. Namely
\begin{equation}
\label{eqr1001}
\begin{cases}
    x &\to\frac{ax+b}{cx+d}   \text{ }, \\
     h_n(x) &\to\widetilde{h}_n(x)=\dfrac{K}{(cx+d)^n}h_n\left(\frac{ax+b}{cx+d}\right) \text{}\\
     &\text{$ac-bd\not=­0$ and $a$, $b$, $c$, $d\in\F$, $K\in\F^{\star}$}.
\end{cases}
\end{equation} 

This identification makes sense because such a transformation changes a solution of a Riccati equation into another solution of a Riccati differential equation. Moreover it preserves the cross-ratio of any four elements in some field extension of $\F$. Also it does not change the irreducible nature of $h_n(x)$. And finally it induces an isomorphism between the Galois groups of the equations, as such an action does not introduce any new algebraic relation among the roots of $\widetilde{h}_n$ that can not be deduced from the  algebraic relations satisfied by the roots of $h_n$ and vice-versa.

We will take advantage of similar substitutions  in order to put the anharmonics equations into a normal form in a sense that will be made precise in section \ref{one}.

The main result that we obtain is
\begin{theorem}
\label{you}
Let be an integer $n\in\N\,,n\geqslant4$. Let $H\subset PGL(2,\Ccc)$ be a finite subgroup.
We consider the equation
$$u^\prime+B_0+B_1u+B_2u^2=0\qquad (\star),$$
a Riccati equation having algebraic solutions of degree $n$ and of Galois group $H$.
There exists a polynomial $F_{n,H}(X,Y)$ in two variables and with constant coefficients ($\in\Ccc$) such that for every algebraic solution $u$ of ($\star$) with minimal polynomial $h_n$ considered modulo equivalence there exists $T\in\F\setminus\Ccc$:
$$h_n(x)=F_{n,H}(x,T).$$   
\end{theorem}
This paper is structured as follows: in section \ref{one} we give a brief review of differential Galois theory and prove that the Galois groups of the anharmonics are finite subgroups of $PGL(2,\Ccc)$. We finish this section with the proof of the main theorem \ref{you}.
Then we give the possible degrees of the anharmonics, in the tables at the end of subsection \ref{two}. An example is also given in subsection \ref{two}. 
The general literature focuses only on the cases where the Galois group is tetrahedral, octahedral or icosahedral. In the present work (section \ref{sric5}) we give all the cases, including the cyclic and dihedral cases. These last two cases are, remarkably enough, related to elliptic functions (Darboux-Halphen and Ramanujan systems) and modular functions. This is in our opinion one of the relevant point of this paper.
We then study the irreducible degree $3$ polynomial coming from modular forms and the Darboux-Halphen system. In section \ref{drach} we are interested in the following problem: for which elliptic function $q$ does the Riccati equation $\dfrac{dY}{dz}+Y^2=q$ admit algebraic solutions over the differential field $\C(\wp,\wp^\prime)$? This section is highly dependent on algebraic invariant theory. In particular we show that for a solution $u$ of this Riccati equation to be algebraic over $\C(\wp,\wp^\prime)$: $\Phi(u)=0$, with $\Phi(x)\in\C(\wp,\wp^\prime)[x]$, it is necessary that the fourth transvectant $\tau_4(\Phi)(x)$ of its minimal polynomial $\Phi(x)$ to vanish. This is summarized in the following theorem
\begin{theorem} For $\Phi(x)=x^n+ \frac{n}{1!}a_1x^{n-1}+\frac{n(n-1)}{2!}a_2x^{n-2}+\ldots+a_n$ with $a_1=0$ to be a minimal polynomial over the differential field $\C(\wp,\wp^\prime)$ of a solution $u$ of $\dfrac{dY}{dz}+Y^2=q$, with $q\in\C(\wp,\wp^\prime)$, it is necessary that
	 $$X(\Phi)(x)= -nx\Phi ,\quad \tau_4 (\Phi)(x)= 0$$
	 where $X=\dfrac{\partial}{\partial z}+(q-u^2)\dfrac{\partial}{\partial u}$.
	 \end{theorem}
The most important observation raised by the previous theorem is certainly the vanishing of the fourth transvectant as condition for the existence of algebraic solutions of the Riccati equation. This vanishing of the fourth transvectant is related to the generalized Chazy equation (cf. section \ref{drach}).
In section \ref{section5} we make the link between the previous study of section \ref{drach} and hypergeometric differential equation and we finish in section \ref{section6} by raising some questions.

\section{Algebraic solutions of the Riccati equation on $\Ccc(t)$ and the cross-ratio}
\label{one}
Before proving the theorem \ref{you} we recall certain needed facts about differential Galois theory from which we deduce some preliminary results. Our main reference is \cite{sing2003}.

A differential field $(\F,\delta)$ is a field together with a derivation $\delta$ on $\F$. One says that a differential field $(\mathfrak{F},\Delta)$ is a differential field extension of $(\F,\delta)$ if $\mathfrak{F}$ is a field extension of $\F$ and its derivation $\Delta$ extends $\delta$. 

In the following $y^{\prime}$ and $y^{\prime\prime}$ will stand for $\delta(y)$ and $\delta^2(y)$. Let an ordinary homogeneous second order linear differential equation
$$L(y)=y^{\prime\prime}+b_1y^{\prime}+b_2y=0, \quad b_i\in \F$$
be given. Let $\eta,\zeta$ be a fundamental set of solutions of $L$ (two independent solutions generating its two dimensional vector space of solutions $V$ over $\Ccc$). The differential extension field 
 $$\mathfrak{F}=\F<\eta,\zeta>=\F(\eta,\eta^\prime,\zeta,\zeta^\prime)$$
is called a Picard-Vessiot extension if in addition $\mathfrak{F}$ and $\F$ have the same field of constants. The differential Galois group $\mathfrak{G}(L)$ of $\mathfrak{F}$ over $\F$ is the group of differential automorphisms of $\mathfrak{F}$ that leave $\F$ invariant (an automorphism $\sigma$ is differential if $\sigma(a^\prime)=(\sigma a)^\prime$, $a\in \mathfrak{F}$).  
 
The previous choice of a basis of solution induces a faithful representation of $\mathfrak{G}(L)$ as a subgroup of $GL(2,\Ccc)$, defined in the following way: for $\sigma\in\mathfrak{G}(L)$ one has
 \begin{equation*}
\begin{cases}
    \sigma(\eta)=a_\sigma\eta+b_\sigma\zeta \\
    \sigma(\zeta)=c_\sigma\eta+d_\sigma\zeta.
\end{cases}
\end{equation*}
 A different choice of basis $(\eta_1,\zeta_1)$ leads to equivalent representations as there exists $M\in GL(2,\Ccc)$: $(\eta_1,\zeta_1)=(\eta,\zeta)M$. We identify these equivalent representations. Also one can show, (see \cite[p.19]{sing2003}), that the differential Galois group considered as a subgroup of $GL(2,\Ccc)$ is an algebraic subgroup.
We first prove that the Galois groups of the anharmonic equations $h_n(x)$ \eqref{eqr89} are finite.
\begin{proposition}
Let $h_n(x)=x^n+a_1x^{n-1}+\cdots+a_n=0,\,a_i\in\F$ be an anharmonic equation then its Galois group is finite and coincides with a finite subgroup of the differential Galois group of the associated second order linear differential equation.
\end{proposition}
\begin{proof}
One knows that the mapping $y\mapsto\dfrac{y^\prime}{y}$ defines a surjective map between the set of non trivial solutions of $L(y)=y^{\prime\prime}+b_1y^\prime+b_2y=0$ and the solutions of the Riccati equation $u^\prime+B_2u^2+B_1u+B_0=0$, with the $B_i$ functions of the $a_i$ and their derivatives. We suppose that this Riccati equation has solutions which are algebraic of degree $\geqslant4$. 


Let $x_i,\,i=1,\cdots,n$ be its roots in a decomposition field over $\F$. There exists $\dfrac{y_i^\prime}{y_i},\,i=1,\cdots,n$ such that $x_i=\dfrac{y^\prime_i}{y_i}$. Using a fundamental basis of solutions $(\eta,\zeta)$ of $y^{\prime\prime}+b_1y^\prime+b_2y=0$, one sees that the $x_i$ all belong to the Picard-Vessiot extension $\mathfrak{F}=\F<\eta,\zeta>=\F(\eta,\eta^\prime,\zeta,\zeta^\prime)$. But the field extension $\F(x_i,\,i=1,\cdots,n)|\F$ of $\F$ is a differential subfield of the Picard-Vessiot extension as $x_i^\prime+B_2x_i^2+B_1x_i+B_0=0$ for any $i$. Thus it corresponds by the Galois correspondence \cite[p.~25]{sing2003} to an algebraic subgroup $H$ of the differential Galois group $\mathfrak{G}(L)\subset GL(2,\Ccc)$. Moreover the extension $\F(x_i,\,i=1,\cdots,n)|\F$ is normal sub-extension of the Picard-Vessiot extension, as any element of the differential Galois group $\mathfrak{G}(L)$ will send a root of $h_n$ to a root of $h_n$ and also send a solution of the given Riccati equation to a solution of the same equation. Therefore $\F(x_i,\,i=1,\cdots,n)|\F$ is a Picard-Vessiot extension, $H$ is a normal in $\mathfrak{G}(L)$ and the differential Galois group of $\F(x_i,\,i=1,\cdots,n)|\F$ is exactly $\mathfrak{G}(L)/H$. At the same time we remark that any element of the differential Galois group of $\F(x_i,\,i=1,\cdots,n)|\F$ belongs to the classical Galois group of the equation $h_n$ and conversely. Indeed let $\sigma$ belong to the ordinary Galois group of $h_n$; by hypothesis it is transitive and permutes the roots $x_i$ of $h_n$. If $\sigma(x_i)=x_j$ for $i­j$ (the case $i=j$ being clear), one has
$$\sigma(x_i)^\prime=x_j^\prime=-(B_2x_j^2+B_1x_j+B_0)$$ 
and 
$$\sigma(x_i^\prime)=\sigma(-(B_2x_i^2+B_1x_i+B_0))=-B_2\sigma(x_i)^2-B_1\sigma(x_i)-B_0=-B_2x_j^2-B_1x_j-B_0;$$
 therefore extending to the field $\F(x_i,\,i=1,\cdots,n)$, one sees that any element of the classical Galois group is also differential and fixes $\F$, ie belongs to the differential Galois group of $\F(x_i,\,i=1,\cdots,n)|\F$, which is thus a finite group.
  
Finally any element of $ \mathfrak{G}(L) \bigcap \Ccc^\star$, the subgroup of those $\sigma \in \mathfrak{G}(L)$ that act on the solution space $V$ as scalar multiplication, has a trivial action on any $\dfrac{y^\prime}{y}$, for $y$ any non-trivial solution of $L(y)=y^{\prime\prime}+b_1y^\prime+b_2y=0$. Hence by passing to the quotient one gets a finite subgroup of $PGL(2,\Ccc)$.
\end{proof}

The list of the finite algebraic subgroups of $PGL(2,\Ccc)$ is provided in \cite[section2]{bald1979}. We have 
\begin{proposition}
Let G be a finite subgroup of $PGL(2,\Ccc)$. Then either
\begin{itemize}
\item G is a cyclic subgroup, or
\item G is dihedral, or
\item the order of $G$ is $12$ $($the tetrahedral case$)$, or
\item the order of $G$ is $24$ $($the octahedral case$)$, or 
\item the order of $G$ is $60$ $($the icosahedral case$)$.
\end{itemize}
\end{proposition}
These groups admit well-known representations as fractional linear transformations (loc. cit). The tetrahedral subgroup $A_4$ of $PGL(2,\Ccc)$ is isomorphic to the group of linear fractional transformations generated (under composition) by the elements
$$\Theta_2: z\to -z;\quad \epsilon_0:z\to\dfrac{1}{z};\quad \theta_1:z\to\dfrac{1-i}{1+i}\dfrac{z+1}{z-1}$$
with $i$ a solution in $\Ccc$ of $z^2+1=0$.
The dihedral subgroup $\mathfrak{D}_m$ of $PGL(2,\Ccc)$ is isomorphic with the subgroup of linear fractional transformations generated by
$$\Theta_m:z\to\xi z;\quad\epsilon_0:z\to\dfrac{1}{z},\,\text{with $\xi^m=1$ in $\Ccc$}.$$
The octahedral subgroup of $PGL(2,\Ccc)$ is on his side generated by the two linear fractional transformations
$$\theta_1: z\to\dfrac{1-i}{1+i}\dfrac{z+1}{z-1},\quad \theta_2: z\to\dfrac{1-i}{1+i}z.$$
The icosahedral subgroup of $PGL(2,\Ccc)$ comes from the linear fractional transformations
$$\Theta_5:z\to\xi z \quad \epsilon_0:z\to\dfrac{1}{z},\quad \epsilon_1:z\to\dfrac{\alpha z+\beta}{\beta z-\alpha},\,\text{with $\xi^5=1$, $\alpha=\dfrac{\xi^4-\xi}{\sqrt{5}}$ and $\beta=\dfrac{\xi^2-\xi^3}{\sqrt{5}}$ in $\Ccc$}.$$
Finally the cyclic subgroups of order $n$ of $PGL(2,\Ccc)$ are generated by
 $$\Theta_n:z\to\xi z,\, \text{with $\xi^n=1$ in $\Ccc$}.$$
To each of the previous groups of fractional linear transformations is associated an absolute invariant \cite{klein}.

For the tetrahedral case, an absolute invariant is given by the formula
$$\Psi_{tetr}(z)=\dfrac{\psi\left(z\right)}{\phi\left(z\right)}=\left(\dfrac{z^4+1+2i\sqrt{3}z^2}{z^4+1-2i\sqrt{3}z^2}\right)^3.$$
For the dihedral cases one can consider the map
$$\Psi_{dih}(z)=\dfrac{\psi\left(z\right)}{\phi\left(z\right)}=\dfrac{1}{z^m}+z^{m}.$$
Absolute invariants for the octahedral and icosahedral groups are given respectively by
$$\Psi_{oct}(z)=\dfrac{\psi\left(z\right)}{\phi\left(z\right)}=\dfrac{(1+14z^4+z^8)^3}{108z^4(1-z^4)^4}$$
and
$$\Psi_{ico}(z)=\dfrac{\psi\left(z\right)}{\phi\left(z\right)}=\dfrac{(-(z^{20}+1)+228(z^{15}-z^5)-494z^{10})^3}{1728z^5(z^{10}+11z^5-1)^5}.$$
The cyclic group admits the invariant
$$\Psi_{cyc}(z)=\dfrac{\psi\left(z\right)}{\phi\left(z\right)}=z^n.$$
The properties of the absolute invariants $\Psi$ (for any of the five given expressions above) are the following:
\begin{itemize}
  \item They are left invariant under the transformation of the mentioned subgroups, with the action given by
  $$(h_f,\Psi)\to h_f.\Psi:z\to \Psi(h_f.z).$$
  \item Every other absolute invariant for the group is a rational function with coefficients in $\Ccc$ of $\Psi$.
\end{itemize}
Before continuing we state the constance of the cross-ratio of any four solutions of a Riccati equation \cite[p.~63]{davis}.
\begin{lemma}
Let $(\F,\delta=^\prime)$ be a differential field and $u^\prime+B_2u^2+B_1u+B_0=0$ a Riccati differential equation with coefficients in $\F$. Let be  $u_1$, $u_2$, $u_3$, $u_4$ four arbitrary distinct solutions of the Riccati equation. Then the derivative of the cross-ratio
$$\dfrac{(u_1-u_3)(u_1-u_4)}{(u_2-u_3)(u_2-u_4)}$$
vanishes identically. 
\end{lemma}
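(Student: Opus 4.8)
The plan is to reduce everything to the behaviour of a single difference $u_i-u_j$ of two solutions and then to exploit the balanced, telescoping structure of the cross-ratio. First I would record the elementary but decisive identity: if $u_i$ and $u_j$ both satisfy $u^\prime+b_1u^2+b_2u+b_3=0$, then subtracting the two equations gives
$$(u_i-u_j)^\prime=-b_1(u_i^2-u_j^2)-b_2(u_i-u_j)=-(u_i-u_j)\bigl(b_1(u_i+u_j)+b_2\bigr),$$
so that the logarithmic derivative
$$\frac{(u_i-u_j)^\prime}{u_i-u_j}=-b_1(u_i+u_j)-b_2$$
is an affine function of the \emph{sum} $u_i+u_j$ alone. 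The quadratic coefficient $b_1$ is common to both equations, which is what lets the quadratic term factor through $(u_i-u_j)$, while the inhomogeneous term $b_3$ cancels outright. This computation is legitimate because the $u_i$ are assumed pairwise distinct, so each $u_i-u_j$ is a nonzero element of whatever differential extension of $\F$ contains the solutions, and its logarithmic derivative is defined.

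Next I would take the logarithmic derivative of the cross-ratio itself. Writing it as a quotient of products of the four pairwise differences and using $(PQ/RS)^\prime/(PQ/RS)=P^\prime/P+Q^\prime/Q-R^\prime/R-S^\prime/S$, the logarithmic derivative of the cross-ratio is a signed sum of four terms, each of the form $(u_i-u_j)^\prime/(u_i-u_j)$. Substituting the identity above, each term contributes $-b_1(u_i+u_j)-b_2$, and the claim reduces to showing that this signed sum of four such contributions vanishes. The $b_2$-part carries signs $+,+,-,-$ and so sums to $0$ immediately. For the $b_1$-part the decisive structural feature is that in a genuine cross-ratio each of the four symbols $u_1,u_2,u_3,u_4$ occurs in exactly one numerator factor and exactly one denominator factor; hence in the signed sum $\sum\pm(u_i+u_j)$ every variable appears once with a $+$ sign and once with a $-$ sign, so the entire expression collapses to $0$. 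Therefore the logarithmic derivative of the cross-ratio is identically zero, and since the cross-ratio is a nonzero element of the extension field, its ordinary derivative with respect to $\delta$ vanishes as asserted.

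The only genuine subtlety I anticipate is bookkeeping: one must pair the four differences and their signs so that the $b_1$-contributions actually telescope. This is precisely the M\"obius-invariant combinatorics underlying the cross-ratio, and it is worth stressing that the cancellation is \emph{special} to the balanced configuration — an unbalanced product of differences, in which some $u_i$ appears twice in the numerator, leaves a residual term proportional to $b_1$ and does \emph{not} have vanishing logarithmic derivative. Thus the substance of the lemma is exactly that the combination being differentiated is the properly balanced cross-ratio, and I would take care to verify that the four factors are arranged so that each root contributes once to the numerator and once to the denominator before invoking the cancellation.
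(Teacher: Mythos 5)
Your proof is correct, but there is in fact nothing in the paper to compare it against: the lemma is stated without proof, the constancy of the cross-ratio being invoked as a classical fact (attributed to Picard in the introduction). Your argument — subtract two copies of the Riccati equation to get $(u_i-u_j)^\prime=-(u_i-u_j)\bigl(b_1(u_i+u_j)+b_2\bigr)$, so that each pairwise difference has logarithmic derivative $-b_1(u_i+u_j)-b_2$, then telescope the signed sum of four such terms — is the standard and correct way to fill that gap, and you are right to note that it takes place in a differential extension of $\F$ where the nonzero differences have well-defined logarithmic derivatives.

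One point deserves emphasis: your closing caveat about balancedness is not mere bookkeeping; it detects an error in the statement as printed. In the displayed expression $\dfrac{(u_1-u_3)(u_1-u_4)}{(u_2-u_3)(u_2-u_4)}$ the variable $u_1$ occurs in both numerator factors and $u_2$ in both denominator factors, so by your own identity its logarithmic derivative equals $-b_1\bigl[(u_1+u_3)+(u_1+u_4)-(u_2+u_3)-(u_2+u_4)\bigr]=-2b_1(u_1-u_2)$, which does not vanish in general (take $b_1\neq 0$ and $u_1\neq u_2$). Hence the quantity the lemma literally exhibits is not constant; what is constant is the genuine cross-ratio, for instance $\dfrac{(u_1-u_3)(u_2-u_4)}{(u_2-u_3)(u_1-u_4)}$, in which each $u_i$ appears exactly once in the numerator and once in the denominator. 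This is evidently what is intended: the paper itself switches to the balanced form a few lines later, in the proof that the general solution has the form $u=\mathfrak{f}(\mathfrak{t},C)$, where the first display repeats the unbalanced slip and the second display is the correctly balanced ratio. Your proof establishes the intended statement, and your caveat identifies precisely the correction that the printed statement requires.
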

\begin{proof}[Proof of Theorem 1.1]
Our main ingredient in the proof is the constance of the cross-ratio of any four given solutions of a Riccati equation. Nevertheless we are aware that other proofs could be given with more differential Galois theory machinery.

The above absolute invariants will be useful in order to express the roots of the anharmonics. We will then use the constance of the cross-ratio to give the form of the general solution of the Riccati equation. 

For each anharmonic $h_n$ with Galois group $H \subset PGL(2,\Ccc)$ (as represented above), consider one of its roots $x_i$. It is therefore solution of the associated Riccati equation $u^\prime+B_2u^2+B_1u+B_0=0$. 
Let $H_{x_i}$ be the stabilizer of $x_i$. As the Galois group of $h_n$ is transitive, one has if $N=\#H$ denotes its order, $N=np$, where $p$ is the cardinal of the stabilizer $H_{x_i}$ and $n$ is the degree of $h_n$. Consider the action of $H_{x_i}$ on the set of roots of $h_n$. By constance of the cross-ratio no element of $H$ different from the identity element can fix more than two roots. Thus two cases can happen. $H_{x_i}$ does not fix any other root $x_j­x_i$; then its action on a root $x_j­x_i$ gives rise to an orbit $H_{x_i}x_j$ of length $p$. By the disjointness of the different orbits and the fact that the total sum of their cardinals is $n-1$, one has: $p|(n-1)$. In the same way one sees that $p|(n-2)$ if it happens that $H_{x_i}$ fixes another root $x_j­x_i$. 

Hereafter we will show in the analysis of the subsection \ref{two} that all the stabilizers are cyclic. 

Consider the rational fraction
$$\Psi(Z)-T$$
where $T\in\F\setminus{\C}$ is such that the numerator of $\Psi(Z)-T$ is irreducible over $\F$.
Choose $\tauu$ as a root of its numerator (which is a function of $t$ possibly multivalued): 
\begin{equation}
\label{eqpr1}
\Psi(\tauu)=\dfrac{\psi(\tauu)}{\phi(\tauu)}=T.
\end{equation}
The extension $\C(\tauu)/\C(t)$ is algebraic of degree $N$. Moreover if $\tauu$ is chosen as such and the associated group is not cyclic, then any other root of $\Psi(Z)-T$ is in $\C(\tauu)$ (see \cite{bald1979} for this particular fact). 

Now we construct an anharmonic $h_n$ from the rational function $\Psi$. To this end we have, with the previous notations \eqref{eqpr1}  
\begin{proposition}
Let $H$ be a finite homography subgroup of $PGL(2,\C)=PSL(2,\C)$ of cardinal $N=np$. Let $U(Z)$ be a polynomial of degree $n$ with complex coefficients. Assume also that 
$$U\left(\dfrac{aZ+b}{cZ+d}\right)=\dfrac{k}{(cZ+d)^n}U(Z)\qquad(\star\star)$$
where $k$ is in $\C$ and $ \displaystyle g= \left(\begin{array}{cc}
           a  &  b \\
           c &  d\\
 \end{array}\right)\  \in H$ (such a polynomial exists by classical invariant theory, \cite[chap. 2]{lamotke}). Let $\eta_i$ be n distinct complex numbers which constitute an orbit under the action of $H$, with the $\eta_i$ distinct from the roots of $U(Z)$. Let $\Psi$ be an absolute invariant of the given finite homography subgroup of $PGL(2,\C)$. Define
$$x_i=\dfrac{\left[\dfrac{1}{\tauu-\eta_i}-\dfrac{U^\prime(\tauu)}{nU(\tauu)}\right]}{\Psi^\prime(\tauu)}:=\mathfrak{f}(\tauu,\eta_i).\qquad (\star\star\star)$$
Then the $x_i$ satisfy a same Riccati equation in the variable $\tauu$ and the equation descends to an equation over $\C(t)$.
\end{proposition}
\begin{proof}
To deduce the first part of the proof we remark that
$$\eta_i=\tauu-\left[x_i\Psi^\prime(\tauu)+\dfrac{U^\prime(\tauu)}{nU(\tauu)}\right].$$
Derivation with respect to $\tauu$ gives
\begin{equation}
\label{rico}
\Psi^\prime x_i^\prime+x_i\Psi^{\prime\prime}+\dfrac{d}{d\tauu}\dfrac{U^\prime}{nU}+\left(x_i\Psi^\prime+\dfrac{U^\prime}{nU}\right)^2=0,
\end{equation}
a Riccati equation with respect to the variable $\tauu$ satisfied by $x_i$. 
Let us show that the general solution $u$ of the previous Riccati equation admits a representation in the form $u=\mathfrak{f}(\tauu,C)$, $C$ an arbitrary constant. Indeed take three solutions $x_1$, $x_2$, $x_3$ with respective representations
$$x_1=\mathfrak{f}(\tauu,\eta_1),\quad x_2=\mathfrak{f}(\tauu,\eta_2),\quad x_3=\mathfrak{f}(\tauu,\eta_3).$$
Define $\rho$ by the implicit equation
$$u=:\mathfrak{f}(\tauu,\rho).$$
We show that $\rho$ is constant. The cross-ratio of $u$, $x_1$, $x_2$, $x_3$, is a constant $K_c$ . So
$$\dfrac{(u-x_1)(x_3-x_1)}{(x_3-x_2)(u-x_2)}=K_c.$$
But by ($\star\star\star$)
$$\dfrac{(u-x_1)(x_3-x_1)}{(x_3-x_2)(u-x_2)}=\dfrac{(\rho-\eta_1)(\eta_3-\eta_2)}{(\eta_3-\eta_1)(\rho-\eta_2)}=K_c.$$
So $\rho$ is a constant whose value depends on $K_c$. It is the arbitrary constant $C$. Now by an easy computation one sees that the expression $\mathfrak{f}(\tauu,C)$ is invariant under a simultaneous linear fractional transformation $\left(\tauu\to\dfrac{a\tauu+b}{c\tauu+d},\,\,C\to\dfrac{aC+b}{cC+d}\right)$ and $g= \left(\begin{array}{cc}
           a  &  b \\
           c &  d\\
 \end{array}\right)\  \in H$ using ($\star\star$) and ($\star\star\star$). That is $$\mathfrak{f}(\tauu,C)=\mathfrak{f}\left(\dfrac{a\tauu+b}{c\tauu+d},\dfrac{aC+b}{cC+d}\right).$$
Therefore $u=\mathfrak{f}(\tauu,C)$ is at the same time the general solution of 
 $$\dfrac{du}{dt}+T^\prime(t)\dfrac{u\Psi^{\prime\prime}+\dfrac{d}{d\tauu}\dfrac{U^\prime}{nU}+\left(u\Psi^\prime+\dfrac{U^\prime}{nU}\right)^2}{\Psi^{\prime2}}=0$$
 and of the corresponding equation with $\tauu$ replaced by $\dfrac{a\tauu+b}{c\tauu+d}$. Thus the coefficients of the Riccati equation under consideration are invariant under the action of $H$. They are hence rational functions of $T$ and $T^\prime$ and then they belong to $\C(t)$.
\end{proof}

If we eliminate $\tauu$ between the two equations
$$T=\Psi(\tauu)\quad\text{and}\quad x_i=\mathfrak{f}(\tauu,\eta_i),$$
the resulting equation leads to the desired form of the anharmonic equation of the theorem \eqref{you}. More precisely, the resulting equation is of degree $N$; also one sees that the construction does not depend of $\eta_i$. Moreover the stabilizer of $\eta_i$ fixes the two equations. Therefore the equation that we get is the $p$-th power of a polynomial $F_{n,H}(x,T)$ $($as any of its roots will have multiplicity $p$$)$, the polynomial which we are looking for (after an eventual division by the leading coefficient). This monic polynomial of degree $n$ is in $\F[x]$ and is irreducible by the specifications on the $x_i$.
\end{proof}
\subsection{Degree of the anharmonics}
\label{two}
We have seen in this section that each anharmonic $h_n$ has a Galois group $H$ of cardinal $N=np$, with $p$ the cardinal of a stabilizer of a point (the stabilizers are conjugate) and $p|(n-1)$ or $p|(n-2)$. Moreover $H$ is either cyclic, or dihedral, or tetrahedral or octahedral or icosahedral. Also given any such group $H$, all its subgroups under the previous constraints can be a candidate for the stabilizer subgroup of a root of an anharmonic $h_n$, as in the reasoning for the construction of an anharmonic, the latter were arbitrary.

We are going to use these specifications in order to determine the possible degrees $n$. 
\begin{itemize}
\item First of all one remarks that if $p=1$, $H$ can be any of the finite subgroups of $PGL(2,\Ccc)$ and $n$ assumes an arbitrary value $\geqslant4$ in the cyclic case and arbitrary even value $\geqslant4$ in the dihedral case. It takes the value $n=12$ in the tetrahedral case, the value $n=24$ in the octahedral case and the value $n=60$ in the icosahedral case.

 \item Next assume $p>1$.
 \begin{enumerate}
 \item Then $H$ can not be cyclic. Assume the contrary; then because the group $H$ acts transitively on the roots of $h_n$, the stabilizers of the roots are conjugate. The group being cyclic, hence commutative, we see that all the stabilizers are cyclic subgroups and are the same: $H_{x_i}=H_{x_j}$ for $i­j$. Therefore $H_{x_i}$ fixes all the roots of the anharmonic $h_n,\,n\geqslant4$ and is different from the identity ($p>1$). This is can not happen because of the constance of the cross-ratio (any subgroup of $H$ which fixes more than two roots, fixes all of them and $H_{x_i}$ should be the identity).
 
\item If $H$ is dihedral then $p=2$; the case $p|(n-1)$ gives an odd $n$ and $p|(n-2)$ an even $n$. To justify this statement we remark that the dihedral group of order $N=2m$ is generated by two transformations $\Theta,\,\epsilon$ with
 $$\Theta^m=\Id;\quad\epsilon^2=\Id,\quad \epsilon^{-1}\Theta\epsilon=\Theta^{-1}.$$
It contains the $2m$ transformations
$$\Theta^{l},\, l\in\{0,\cdots,m-1\}\,\,\text{and}\,\,\epsilon\Theta^r,\,r\in\{0,\cdots,m-1\}.$$
We claim that the stabilizer of a root $x_i$, $H_{x_i}$ can not be one of $\langle\Theta^l\rangle,\,l\in\{0,\cdots,m-1\}$. Our argument is based on the transitivity of the Galois group $H$, which gives that the stabilizers $H_{x_i}$ are all conjugated. Assume to the contrary that it is of that form.

If $H_{x_k}$ is of the form $\langle\Theta^l\rangle$ and is conjugated to $H_{x_j}$ with $k­j$, via some $\epsilon\Theta^d$, then a quick computation gives
$$\Theta^{-d}\epsilon^{-1}\Theta^{l}\epsilon\Theta^d=\Theta^{-d}\Theta^{-l}\Theta^d=\Theta^{-l}.$$
Thus $H_{x_j}=H_{x_k}$.

On the other hand if  $H_{x_j}$ is conjugated to $H_{x_k}$ via some power of $\Theta$ we get again $H_{x_j}=H_{x_k}$. As this is true for arbitrary $j­k$ and the anharmonic admits at least three roots, $H_{x_k}$ fixes at least three roots of $h_n$. Hence $H_{x_k}$ must be the identity by a preceding argument. This contradicts the fact that $\sharp(H_{x_k})=p>1$.

To summarize, in the dihedral case, the stabilizers are of the form $H_{x_i} =\langle\epsilon\Theta^i\rangle$ and $p=2$.  

\item We examine the tetrahedral case $N=12=np$. The tetrahedral group contains only elements of order $2$ or $3$. If $p=2$, then $n=6$. The only case which can happen also in the tetrahedral case is $p=3$ and $n=4$.
\item If the Galois group $H$ is octahedral, then $N=24=np$. The octahedral group contains elements of order at most $4$. If $p=2$, then $n=12$; $p=3$ gives $n=8$ and finally $p=4$ corresponds to the value $n=6$. We remark that the group of order $4$ in the case of degree $6$ can not be the Klein group $\Z/2\Z\times\Z/2\Z$ as it is a normal subgroup of the octahedral group so that all its conjugates are itself. As a consequence this latter group would fix all the roots of the anharmonic $h_n,\,n\geqslant4$ and this would imply that it is the identity group, by a previous remark (constance of the cross-ratio). This would be an obvious contradiction because $p>1$. So the stabilizer are again cyclic. 
\item Lastly the icosahedral has three cases; $p=2$ and $n=30$. There appears also the case $p=3$, $n=20$ and finally the case $p=5$, $n=12$. The icosahedral group contains as is well-known only substitution of order $2$, $3$ and $5$.
\end{enumerate}
\end{itemize}
These various cases according to $p=1$ or $p>1$ are summarized in the following tables 

\begin{center}\begin{tabular}{|c|c|c|}\hline Group  & n & p \\\hline Cyclic & n & 1 \\\hline Dihedral & n & 1 \\\hline Tetrahedral & 12 & 1 \\\hline Octahedral & 24 & 1 \\\hline Icosahedral & 60 & 1 \\\hline \end{tabular}\label{defaulttable1}\\\vspace{0.5cm}The case $p=1$.
\end{center}

\begin{center}
\label{defaulttable2}
\begin{tabular}{|c|c|c|}\hline Group & p & n \\\hline -- & -- & -- \\\hline Dihedral & n & 2 \\\hline Tetrahedral & \{3,2\} & \{4,6\} \\\hline Octahedral & \{2,3,4\} & \{12,8,6\} \\\hline Icosahedral & \{2,3,5\} & \{30,20,12\} \\\hline \end{tabular}\\\vspace{0.5cm}The case $p>1$.
\end{center}
\begin{example}
We give here an example. Assume we are in the cyclic case, then $p=1$, $n=N$, $\Psi_{cyc}=\mathfrak{t}^n=T$, with $T\in\F$ such that $\Psi_{cyc}(Z)-T$ is irreducible. Also set $U(Z)=Z^n-K$, with $K\in\Ccc^\star$. Choose the $\eta_i$ as the $n$ solutions of $z^n=K^\prime$ with $ K^\prime\in \C^\star$ different from $K$. One has taking $x$ to be any of the $x_i$s 
$$x=\mathfrak{f}(\mathfrak{t},\eta)=\dfrac{1}{n\mathfrak{t}^{n-1}}\left(\dfrac{1}{\mathfrak{t}-\eta}-\dfrac{\mathfrak{t}^{n-1}}{\mathfrak{t}^n-K}\right);$$
this implies
$$\dfrac{\eta}{\mathfrak{t}}=1-\left(nTx+\dfrac{T}{T-K}\right)^{-1},\, T\in\F.$$
This can be again written
$$\dfrac{\eta}{\mathfrak{t}}=x$$
as such a transformation preserves the fundamental properties of anharmonics. In conclusion we get
$$x^n=\dfrac{K^\prime}{T}.$$
\end{example}

 \section{Riccati equation and Darboux-Halphen Systems}
\label{sric5}
In this section we will consider a remarkable irreducible third order polynomial whose three roots satisfy the same Riccati equation over the field $\C(E_2,E_4,E_6)$ endowed with its natural derivation $\dfrac{d}{d\tau}$; also we make a link between these roots and a system of Darboux-Halphen type. 

Let us first introduce some basic definitions about modular forms. We recall that there is a natural way to identify modular forms with some functions defined on lattices (rank two $\Z$-submodules of $\C$) in $\BC$ having some invariance and homogeneity properties. This identification will allow the introduction of some fundamental partial differential operators, initially considered by Fricke, Klein, Halphen, Legendre,$\cdots$. We refer to \cite{serre1970,zag2004} for details. Let $\mathfrak{R}$ be the set of lattices in the $\BR$-vector space $\BC$ and $\displaystyle{\mathfrak M}= \{(\omega _1, \omega _3) \in \BC^{*2},\, \tau=\frac{ \omega _3}{\omega _1},\,\Im\tau > 0 \}$. Let us also set $\mathfrak{H}:=\{z\in\C, \Im(z)>0\}$. Then $\mathfrak R$ is identified with the quotient ${\mathfrak M}/SL(2,\Z)$. Moreover $\BC^{*}$ acts on $\mathfrak R$ and on ${\mathfrak M}$ with two more identifications
\[{\mathfrak M}/\BC^{*} \approx  {\mathfrak H},\quad {\mathfrak R}/\BC^{*} \approx {\mathfrak H}/ \PS. \]
A function $F: \mathfrak R \mapsto \BC$ is of weight $n$ if $F(\lambda \Lambda)= \lambda ^{-n}F(\Lambda)$ for all lattices $\Lambda$ and all $\lambda \in \BC^{*}$. For $(\omega _1,\omega _3)\in \mathfrak M$ and $\Lambda= \Lambda( \omega _1, \omega _3)=\Z\omega_1\oplus\Z\omega_3$, the lattice generated by $\omega _1$ and $\omega _3$, we write simply $F(\omega _1, \omega _3)= F(\Lambda)$ so that $F(\lambda \omega _1,\lambda \omega _3)=\lambda ^{-n} F(\omega _1, \omega _3)$. Moreover $F(\omega _1, \omega _3)$ is invariant under the action of $SL(2,\Z)$ on $\mathfrak M$ (as the lattice $ \Lambda( \omega _1, \omega _3)$ is). This implies that there exists a well-defined function 
$$\displaystyle  f: {\mathfrak H} \mapsto \BC,\,F(\omega _1, \omega _3)= \omega_1 ^{-n} f\left(\dfrac{ \omega _3}{\omega _1}\right).$$
The invariance of $F$ under $SL(2,\Z)$ means when $F$ is also of weight $n$ that is $ \displaystyle f(\tau)= (\gamma \tau+ \delta)^{-n} f\left(\frac{\alpha \tau + \beta}{\gamma \tau+ \delta}\right)$. For even $n$, one recovers the classical invariance property of elliptic modular forms. Moreover in this case as $I_2$ and $-I_2$ act in the same way, one can consider that it is the modular group $PSL(2,\Z)$ which acts on the functions $F$. 
We set
 $ \displaystyle{ \omega_2=   \omega_1+ \omega_3}$ and adopt the classical notations for Weierstrass elliptic functions 
 \begin{equation}
\displaystyle \wp(u)= \wp(u; \omega_1, \omega_3)= \frac{1}{u^2}+ \sum_{m ^2+ n^{2} \neq 0} 
\left( \frac{1}{( u+ 2m \omega_1+ 2n \omega_3)^2}- \frac{1}{( 2m \omega_1+ 2n \omega_3)^2}\right).
\end{equation}
Let us now define
$$\zeta(u,\omega_1,\omega_3):=\dfrac{1}{u}+ \sum_{m ^2+ n^{2} \neq 0} 
\left( \frac{1}{( u+ 2m \omega_1+ 2n \omega_3)}+ \frac{1}{( 2m \omega_1+ 2n \omega_3)}+\dfrac{u}{( 2m \omega_1+ 2n \omega_3)^2}\right)$$
and for further use consider the modular forms (see below for definition) for the group $$\Gamma(2)=\left\{g=\in SL(2,\Z): g\equiv \Id\mod 2\right\}$$
\begin{equation*}
\begin{split}
e_1:=\wp(\omega_1;2\omega_1,2\omega_3)\\
e_2:=\wp(\omega_2;2\omega_1,2\omega_3)\\
e_3:=\wp(\omega_3;2\omega_1,2\omega_3)
\end{split}
\end{equation*}
with $\omega_2=\omega_1+\omega_3$. With this definition of the $e_k$s we further introduce another set of functions which depend on the modular variable $\tau$
\begin{equation*}
\begin{split}
e_k=:\dfrac{1}{(2\omega_1)^2}e_k(\tau)\\
k=1,2,3.
\end{split}
\end{equation*}
We will use the functions $e_k(\tau)$ in the theorem \ref{theorem} below.

Moreover with previous notations one has
\begin{equation}\label{Wei}
\zeta^{\prime}= -\wp,\quad \eta_1:= \zeta (\omega_1),\quad
\eta_3:= \zeta(\omega_3).
 \end{equation}
 
On the other hand, one of the most general principle in elliptic function theory is that for any elliptic function 
 $\phi(u, \omega_1, \omega_3)$ of periods $2\omega_1, 2\omega_3$, the two functions
 \begin{equation}
 \begin{split}
 \displaystyle f(u)&:=\omega_1 \frac{\partial \phi}{\partial \omega_1 }+ \omega_3\frac{\partial \phi}{\partial \omega_3} +u \frac{\partial \phi}{\partial u }\\
 \displaystyle g(u)&:= \eta_1 \frac{\partial \phi}{\partial \omega_1 }+ \eta_3\frac{\partial \phi}{\partial \omega_3} +\zeta(u) \frac{\partial \phi}{\partial u}
 \end{split}
 \end{equation}
 are also elliptic with the same periods $2\omega_1, 2\omega_3$. We set
 $$\tau= \frac{\omega_3}{\omega_1 },\quad\Im \tau>0,\quad q= e^{2i\pi\tau}$$
  and define the Eisenstein series
  \begin{equation}
  \begin{split}
 g_2&= 60 \sum_{m^2+ n^2 \neq 0} \frac{1}{( 2m \omega_1+ 2n \omega_3)^4},\\
 g_3&= 140\sum_{m^2+ n^2 \neq 0} \frac{1}{( 2m \omega_1+ 2n \omega_3)^6}.
 \end{split}
 \end{equation}
 As a result \cite{frosti1882}
 \begin{equation}\label{Frob1}
 \begin{split}
 -2\wp= &\omega_1 \frac{\partial \wp}{\partial \omega_1 }+
 \omega_3 \frac{\partial \wp}{\partial \omega_3} +u \frac{\partial
   \wp}{\partial u }\\
-2\wp ^2+ \frac{1}{3} g_2= &\eta_1 \frac{\partial \phi}{\partial \omega_1}+ \eta_3\frac{\partial \phi}{\partial \omega_3} +\zeta(u) \frac{\partial \phi}{\partial u}.
\end{split}
 \end{equation}
The Eisenstein series $g_2$, $g_3$ are related to the normalized Eisenstein series $E_4$ and $E_6$ by the following relations
 \begin{equation}
 \begin{split}
 E_4(\tau)=& 12 \left( \frac{\omega_1}{\pi}\right)^4 g_2= 1+ 240\sum_{n=1}^{\infty} \frac{n^3 q^n}{1- q^n}\\
 E_6(\tau)=& 216 \left( \frac{\omega_1}{\pi}\right)^6 g_3= 1- 504\sum_{n=1}^{\infty} \frac{n^5 q^n}{1- q^n}
 \end{split}
 \end{equation}
 We also set 
 $$E_2=1-24\sum_{n\geqslant1}\sigma_1(n)q^n=1- 24 q - 72 q^2- 96 q^3- 168 q^4-\cdots.$$ 
 Finally we recall (\cite[p. 84]{serre1970}) that
 $$\wp^{\prime2}=4\wp^3-g_2\wp-g_3$$
 so that 
 $$\wp^{\prime\prime}=6\wp^2-\dfrac{1}{2}g_2.$$
 When $g_2=0$ we say that we are in the equianharmonic case. Then the previous differential equation for the Weirstra§ $\wp$-function reduces to
 \begin{equation}
 \label{equian}
 \wp^{\prime\prime}=6\wp^2.
 \end{equation}
 For the following definition we refer to \cite{zag2004}.
 \begin{definition} Let $\Gamma$ be a subgroup of finite index of the the modular group $SL(2, \BZ)$. A meromorphic $($holomorphic$)$ function $f:\mathfrak H \mapsto \BC$ is a weak meromorphic $($holomorphic$)$ modular function of weight $k$ with respect to $\Gamma$ if
 $$ f\left(\frac{\alpha \tau + \beta}{\gamma \tau+ \delta}\right)= (\gamma \tau+ \delta)^{n} f(\tau)$$
 for every $\displaystyle{ 
 \begin{pmatrix}
 \alpha& \beta\\
 \gamma& \delta
 \end{pmatrix}} \in SL(2,\Z)$.
 If $f$ is meromorphic $($holomorphic$)$ at the cusps of $\Gamma$, it will be called a meromorphic $($holomorphic$)$ modular form of weight $n$ with respect to $\Gamma$ (the $q$ expansion of $f$ has no negatively indexed terms, respectively only a finite number of negatively indexed terms, in the holomorphic respectively in the meromorphic case).
\end{definition}

The above Eisenstein series are modular forms of weight $4$ and $6$ respectively for the subgroup $SL(2,\BZ)$. It is a fundamental result, that will be used later, that every modular form for $SL(2,\Z)$ is uniquely expressible as a polynomial in $E_4$ and $E_6$ and the extension ring ${\BC}[E_2,E_4,E_6]$ of ${\BC}[E_4,E_6]$ is a differential ring. More precisely the following basic relations of Ramanujan hold \cite{zag2004,ramanujan1916}:
\begin{equation} \label{S1}
 \begin{split}
\frac{1}{2i\pi}\frac{d}{d\tau}E_4&= \frac{1}{3}(E_2 E_4- E_6) \\
\frac{1}{2i\pi}\frac{d}{d\tau}E_6&= \frac{1}{2}(E_2 E_6- E_4^2)\\
\frac{1}{2i\pi}\frac{d}{d\tau}E_2&= \frac{1}{12}(E_2^2-E_4).
\end{split}
\end{equation}
In other words, the field $ \left(\BC \left( E_2, E_4, E_6\right), \dfrac{1}{2i\pi}\dfrac{d}{d\tau} \right) $ is a differential field. The subfield of constants is the field of complex numbers $\BC$ (as it is embedded in the field of meromorphic functions on $\mathfrak{H}$). 

For  the particular value $u= \omega_i,\, i= 1,2,3$, the equations \eqref{Frob1} become
  \begin{equation} \label{Frob}
  \begin{split}
-2 e_i&= \omega_1 \frac{\partial e_i}{\partial \omega_1 }+
\omega_3\frac{\partial  e_i}{\partial \omega_3},\\
-2 e_i ^2+ \frac{1}{3} g_2&= \eta_1 \frac{\partial e_i}{\partial \omega_1}+ \eta_3\frac{\partial e_i}{\partial \omega_3}.
 \end{split}
\end{equation}
We will later deal with the partial differential operator
\begin{equation*}
 D_1= \eta_1 \frac{\partial }{\partial \omega_1}+ \eta_3\frac{\partial }{\partial \omega_3}.
\end{equation*}
Its importance lies in the fact that it converts ellipticity properties into differential relations  for certain modular forms \cite{frosti1882,halphen1886,jordan1896}.
 \subsection{$ e_1, e_2, e_3$ and the Darboux-Halphen system}
 
\begin{theorem}\label{theorem}
For every $\tau, \Im \tau >0$, the function $e_k(\tau),$ $k=1, 2, 3$, solves the nonlinear differential equation of Riccati type, with coefficients in $ \BC\left(E_2, E_4, E_6\right)$
$$y^\prime= \frac{i}{\pi} \Bigl(- y^2 +  \frac{\pi^2}{3} E_2 (\tau) y 
+\frac{2}{9} \pi^4 E_4(\tau) \Bigr)$$
\end{theorem}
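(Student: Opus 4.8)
The plan is to compute $\dfrac{d e_i}{d\tau}$ directly from the two homogeneity relations \eqref{Frob} together with the Legendre relation, treating $e_i=\wp(\omega_i;2\omega_1,2\omega_3)$ as a function of the two half-periods that is homogeneous of degree $-2$. First I would record the two first-order operators acting on $e_i$: the Euler operator $E=\omega_1\partial_{\omega_1}+\omega_3\partial_{\omega_3}$, for which the first line of \eqref{Frob} reads $E(e_i)=-2e_i$, and the operator $D_2=\eta_1\partial_{\omega_1}+\eta_3\partial_{\omega_3}$, for which the second line reads $D_2(e_i)=-2e_i^2+\tfrac13 g_2$. These two relations can be viewed as a $2\times2$ linear system in the unknowns $\partial_{\omega_1}e_i$ and $\partial_{\omega_3}e_i$, with coefficient matrix $\begin{pmatrix}\omega_1 & \omega_3\\ \eta_1 & \eta_3\end{pmatrix}$.

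The decisive point is that the determinant of this system, $\omega_1\eta_3-\omega_3\eta_1$, is a nonzero \emph{constant} by the Legendre relation $\eta_1\omega_3-\eta_3\omega_1=\tfrac{i\pi}{2}$. Rather than solving blindly, I would form the combination $\eta_1 E-\omega_1 D_2=(\eta_1\omega_3-\eta_3\omega_1)\partial_{\omega_3}$, which isolates the pure $\omega_3$-derivative:
\begin{equation*}
\partial_{\omega_3}e_i=\frac{2}{i\pi}\bigl(\eta_1 E(e_i)-\omega_1 D_2(e_i)\bigr)=\frac{2}{i\pi}\Bigl(-2\eta_1 e_i+2\omega_1 e_i^2-\tfrac13\omega_1 g_2\Bigr).
\end{equation*}
Next I would convert $\partial_{\omega_3}$ into $\dfrac{d}{d\tau}$. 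Normalising the lattice to $\BZ+\tau\BZ$, i.e.\ taking $\omega_1=\tfrac12$ fixed and $\omega_3=\tfrac{\tau}{2}$ so that $\tau=\omega_3/\omega_1$, one has $\dfrac{d}{d\tau}=\omega_1\partial_{\omega_3}$, and the expression above becomes a quadratic polynomial in $e_i$ with coefficients built from $\eta_1$ and $g_2$.

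Finally I would eliminate $\eta_1$ and $g_2$ in favour of modular forms using the classical quasi-period identity $\eta_1=\dfrac{\pi^2}{12\omega_1}E_2(\tau)$ together with $g_2=\dfrac{\pi^4}{12\omega_1^4}E_4(\tau)$ (the rearrangement of the relation $E_4=12(\omega_1/\pi)^4 g_2$ recorded above). Substituting these and $\omega_1=\tfrac12$ makes all powers of $\omega_1$ cancel and yields exactly
\begin{equation*}
\frac{de_i}{d\tau}=\frac{i}{\pi}\Bigl(-e_i^2+\frac{\pi^2}{3}E_2(\tau)\,e_i+\frac{2}{9}\pi^4 E_4(\tau)\Bigr),
\end{equation*}
the asserted Riccati equation, whose coefficients lie in $\BC(E_2,E_4,E_6)$. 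Since the derivation never used the specific index $i$, all three functions $e_1,e_2,e_3$ satisfy the \emph{same} equation, in agreement with the anharmonic philosophy of Section \ref{one}.

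I expect the main obstacle to be purely bookkeeping rather than conceptual: getting the constants and signs exactly right, namely the value $\tfrac{i\pi}{2}$ and its sign in the Legendre relation, the precise constant $\tfrac{\pi^2}{12\omega_1}$ in the identity for $\eta_1$, and the normalisation $\omega_1=\tfrac12$ — which is in fact \emph{forced} by matching the leading coefficient $-\tfrac{i}{\pi}$ of $y^2$ (solving $4\omega_1^2=1$). One must also justify that holding $\omega_1$ fixed while differentiating in $\tau$ is legitimate; this is precisely what the normalisation to $\BZ+\tau\BZ$ achieves, after which the three substitutions are routine and the powers of $\omega_1$ collapse as required.
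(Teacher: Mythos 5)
Your proposal is correct and follows essentially the same route as the paper: both proofs derive the ODE from the two relations in \eqref{Frob} together with the quasi-period identities $\eta_1\omega_1=\tfrac{\pi^2}{12}E_2(\tau)$ and Legendre's relation (which the paper encodes in its formula for $\eta_2\omega_2$), in the normalization $\omega_1=\tfrac12$, $\omega_3=\tfrac{\tau}{2}$. Your explicit linear elimination of $\partial_{\omega_1}$ via the combination $\eta_1 E-\omega_1 D_2$ is just a cleaner organization of the paper's ``straightforward calculation,'' and your constants check out exactly.
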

\begin{proof}
 We have seen in equation \eqref{Frob}, that for the operator 
$D_2= -2 \eta_1 \frac{\partial}{\partial \omega_1 }- 
2 \eta_3 \frac{\partial}{\partial \omega_3 }=-2D_1$

\[D_2 e_k= 4e_k ^2- \frac{2}{3} g_2,\; \;k= 1, 2, 3.\]
We adopt the new independent variables $\displaystyle \omega_1,~ \tau= 
\frac{\omega_3 }{\omega_1}$. Consider the identities

\[  e_k(\omega_k,2\omega_1, 2\omega_3)= (2\omega_1) ^{-2}e_k (\tau),\]
                        \[  \eta_1 \omega_1= \frac{\pi^{2}}{12}E_2 (\tau),     \quad  \eta_1 \omega_3 -\eta_3\omega_1=\frac{i\pi}{2}\]
One has
\begin{equation}
\begin{split}
D_2 e_k&= -2\eta_1\left(\frac{-2}{4\omega_1^3}e_k(\tau)-\frac{\omega_3}{4\omega_1^4}e_k^\prime(\tau)\right)-2\frac{\eta_3}{4\omega_1^3}e_k^\prime(\tau)\\
&=4\left((2\omega_1)^{-2}e_k(\tau)^2\right)-\frac{2}{36}\frac{\pi^4}{\omega_1^4}E_4(\tau).
\end{split}
\end{equation}
Simplification and rearrangement using the previously given relations give the theorem.
\end{proof} 
Thus by the previous theorem one has that: for $\omega = 2i\pi \tau,e_k(\tau)$ verifies the equation
\begin{equation} \label{S2}
 \frac{d}{d\omega}\left(e_k \right)= \frac{1}{2\pi^2}
\left(- e_k^2 +  \frac{\pi^2}{3} E_2 (\tau) e_k 
+\frac{2}{9} \pi^4 E_4(\tau) \right),\;\;1\leq  k\leq 3.
\end{equation}
The system of differential equations \eqref{S1} is now necessary to show that the system (\ref{S2}) leads to a Darboux-Halphen system. Setting $\displaystyle t= \frac{4i}{\pi} \tau$ and 
\[x= \frac{\pi^2}{12}E_2,\quad y= \frac{\pi^4}{12}E_4,\quad 
z=\frac{\pi^6}{216}E_6, \]
\noindent the system \eqref{S1} becomes
\begin{equation}
 \label{S3}
\begin{split}
\frac{dx}{dt}&= \frac{1}{2}x^2- \frac{1}{24}y \\
\frac{dy}{dt}&= 2xy- 3z\\
\frac{dz}{dt}&= 3xz-\frac{1}{6}y^2 .
\end{split}
\end{equation}
With
$X_k=\frac{\pi^2}{12}E_2+ \frac{1}{4}e_k, k=1, 2, 3$, the equations \eqref{S2} 
take the form \citep{halphen1886}
\begin{equation} \label{S4}
\begin{split}
\frac{d}{dt}(X_1+ X_2)&= X_1 X_2\\
\frac{d}{dt}(X_2+ X_3)&= X_2 X_3\\
\frac{d}{dt}(X_3+ X_1)&= X_3 X_1,
\end{split}
\end{equation}
which is a Darboux-Halphen system. Moreover it is transformable into (\ref{S3}) by means of the substitutions:
\begin{equation} \label{S5}
\begin{split}
x&= \frac{1}{3}\left(X_1 +X_2+X_3\right) \\
y&=\frac{4}{3}\left( X_1^2+ X_2^2+ X_3^2-X_1X_2- X_2X_3- X_3X_1\right)\\
z&=\frac{4}{27}\left( 2X_1- X_2- X_3\right) \left( 2X_2- X_3- X_1\right)\left( 2X_3- X_1- X_2\right).
\end{split}
\end{equation}
\subsection{The degree three polynomial for the Riccati equation}
We have seen that the fundamental differential equation
\begin{equation}
\label{weirric}
\wp^{\prime2}=4\wp^3-g_2\wp-g_3
\end{equation}
with $\wp(u):=\wp(u;1,\tau)$ holds. Also as it is well-known
\begin{equation}
\wp^{\prime}\left(\dfrac{1}{2}\right)=\wp^{\prime}\left(\dfrac{\tau}{2}\right)=\wp^{\prime}\left(\dfrac{\tau+1}{2}\right)=0;
\end{equation}
therefore $e_1(\tau)$, $e_2(\tau)$, $e_3(\tau)$ solve the equation
\begin{equation}
4X^3-g_2X-g_3=4(X-e_1(\tau))(X-e_2(\tau))(X-e_3(\tau))=0.
\end{equation}
\begin{proposition}
The solutions $e_i(\tau)$ of the Riccati equation of theorem \ref{theorem} are the roots of the irreducible degree three equation $X^3-\dfrac{g_2}{4}X-\dfrac{g_3}{4}$ on the differential field $\BC \left( E_2, E_4, E_6\right)$, endowed with the derivation $\dfrac{1}{2i\pi}\dfrac{d}{d\tau}$. The Galois group of the equation is the dihedral group $S_3$.
\end{proposition}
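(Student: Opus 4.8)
The plan is to prove irreducibility first and then determine the Galois group through the discriminant. First I would record that $\C[E_2,E_4,E_6]$ is a polynomial ring in the three algebraically independent generators $E_2,E_4,E_6$, hence a unique factorization domain and in particular integrally closed in its fraction field $\C(E_2,E_4,E_6)$. The cubic $X^3-\tfrac{g_2}{4}X-\tfrac{g_3}{4}$ is monic and, up to the fixed normalization of the lattice, its coefficients are nonzero constant multiples of $E_4$ and $E_6$, so they lie in $\C[E_4,E_6]\subseteq\C[E_2,E_4,E_6]$. Since a monic polynomial that factors over the fraction field of an integrally closed domain already factors over the domain (Gauss), and since a cubic is reducible exactly when it has a root, irreducibility over $\C(E_2,E_4,E_6)$ will follow once I show the cubic has no root in the ring $\C[E_2,E_4,E_6]$.

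To rule out a root in the ring I would argue, as sketched above, with weights. Each $e_i$ is a nonzero holomorphic modular form of weight $2$ for $\Gamma(2)$. Writing a hypothetical root as a polynomial in $E_2$ with coefficients in $\C[E_4,E_6]$, the genuine modularity of $E_4,E_6$ together with the quasimodular transformation of $E_2$ from Proposition \ref{pi} (the cocycle term $6\tfrac{\gamma}{i\pi}(\gamma\tau+\delta)$, already nonzero for $\left(\begin{smallmatrix}1&0\\2&1\end{smallmatrix}\right)\in\Gamma(2)$) forces the $E_2$-degree to vanish if the expression is to be modular at all. But then the root lies in $\C[E_4,E_6]$, whose weight-$2$ homogeneous part is zero because no monomial $E_4^aE_6^b$ has weight $4a+6b=2$; this contradicts $e_i\neq0$. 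Hence no $e_i$ belongs to $\C[E_2,E_4,E_6]$ and the cubic is irreducible.

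For the Galois group I would use the classical dichotomy for irreducible cubics: the group is a transitive subgroup of $S_3$, hence $A_3\cong\Z/3\Z$ or $S_3$, and it is $A_3$ precisely when the discriminant is a square in the base field. The discriminant of $X^3-\tfrac{g_2}{4}X-\tfrac{g_3}{4}$ is $\tfrac{1}{16}(g_2^3-27g_3^2)$, and by the classical identity $E_4^3-E_6^2=1728\,\Delta$ this is, up to the normalization and a nonzero constant, a multiple of $E_4^3-E_6^2$. Since $E_2,E_4,E_6$ are algebraically independent, $\C(E_2,E_4,E_6)$ is purely transcendental, isomorphic to $\C(x,y,z)$, in which $y^3-z^2$ is an irreducible, hence squarefree, element of the UFD $\C[x,y,z]$; therefore $\sqrt{y^3-z^2}\notin\C(x,y,z)$ and so $\sqrt{g_2^3-27g_3^2}\notin\C(E_2,E_4,E_6)$. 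The discriminant is thus not a square, and the Galois group is $S_3$.

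I expect this last step to be the main obstacle: one must be certain that the square root of the discriminant (morally $\eta^{12}$, a weight-$6$ cusp form) genuinely escapes the field $\C(E_2,E_4,E_6)$, and the cleanest route is exactly the reduction, via the algebraic independence of $E_2,E_4,E_6$, to the elementary fact that $y^3-z^2$ is not a square in $\C(x,y,z)$.
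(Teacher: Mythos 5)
Your irreducibility argument is essentially the paper's own: both reduce, via Gauss's lemma over the integrally closed ring $\C[E_2,E_4,E_6]$ (monic cubic, so reducibility over the fraction field is equivalent to having a root in the ring), to showing that no $e_i$ lies in $\C[E_2,E_4,E_6]$, and both rule this out by combining the weight-$2$ modularity of $e_i$ for $\Gamma(2)$ with the failure of $E_2$ to be modular for $\Gamma(2)$ (Proposition \ref{pi} applied to $\left(\begin{smallmatrix}1&0\\2&1\end{smallmatrix}\right)$) and the observation that no polynomial in $E_4,E_6$, whose monomials have weights $4a+6b\neq 2$, can equal a nonzero weight-$2$ form. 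Where you genuinely depart from the paper is the second assertion: the paper simply states ``the Galois group of the equation is thus $S_3$'' with no supporting argument, even though an irreducible cubic a priori has group $A_3$ or $S_3$. Your discriminant computation supplies exactly the missing justification, and it is correct: $\mathrm{disc}=\tfrac{1}{16}\left(g_2^3-27g_3^2\right)$ is a nonzero constant multiple of $E_4^3-E_6^2$, and under the isomorphism $\C(E_2,E_4,E_6)\cong\C(x,y,z)$ furnished by algebraic independence this element corresponds to $y^3-z^2$, which is irreducible, hence squarefree, in the UFD $\C[x,y,z]$ and therefore not a square in the fraction field; so the group cannot be $A_3$. (Your parenthetical identification of the square root with $\eta^{12}$ is the right moral picture, and the reduction to algebraic independence is the right way to make it rigorous, since elements of $\C(E_2,E_4,E_6)$ need not transform modularly.) In short, your proof follows the paper's method for irreducibility and, in addition, closes a real gap the paper leaves open in the determination of the Galois group.
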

\begin{proof}
The $e_i(\tau)$s are modular forms of weight $2$ for $\Gamma(2)$ and they are distinct as the discriminant of the non singular elliptic curve $Y^2=4X^3-g_2X-g_3$ is the discriminant of the degree three polynomial $4X^3-g_2X-g_3$ up to non zero constant factor.
Moreover we have the well known identity

$$E_2\left(\dfrac{\alpha \tau + \beta}{\gamma \tau+ \delta}\right)=\left(\gamma \tau+ \delta\right)^2E_2(\tau)+6\dfrac{\gamma}{i\pi}\left(\gamma \tau+ \delta\right)$$ 
for all $\left(\begin{array}{cc}\alpha & \beta \\\gamma & \delta\end{array}\right)\in SL(2,\Z)$  \cite{zag2004}.

Now as 
$$(e_1+e_2+e_3)=0$$
one easily sees that the $e_i$s can not all belong to $\C(E_2,E_4,E_6)$ as the functions $E_2$, $E_4$ and $E_6$ are algebraically independent over $\C$ \cite{mahler,nishioka}. We claim that the polynomial
$$X^3-\dfrac{g_2}{4}X-\dfrac{g_3}{4}$$
whose three roots are the $e_i$s is irreducible over the differential field $\C(E_2,E_4,E_6)$. We know that $g_2$ is modular of weight $4$ and $g_3$ modular of weight $6$ for $SL(2,\Z)$. To prove the irreducibility of this polynomial, It suffices to show that it is integral over $\C[E_2,E_4,E_6]$ and then use Gauss irreducibility criterion. This is equivalent to the fact that none of the $e_i$s belongs to the latter ring. To see this is true, we assume for instance it were the case that $e_1$ belonged to $\C[E_2,E_4,E_6]=\C[E_4,E_6]\oplus\C[E_4,E_6]E_2\oplus\C[E_4,E_6]E_2^2\oplus\cdots$. $e_1$ is modular of weight $2$ for $\Gamma(2)$, using the transformation rule for $E_2$ applied to the matrix $g=\left(\begin{array}{cc}1 & 0 \\2 & 1\end{array}\right)$, one sees that $E_2$ is not modular for $\Gamma(2)$. As the weights of $E_4$ and $E_6$ are $4$ and $6$, $e_1$ can not be a polynomial in them. Thus from the previous decomposition of $\C[E_2,E_4,E_6]$ one gets a contradiction. Finally the last part follows from standard Galois theory: it depends on whether the discriminant $D$ of the polynomial $X^3-\dfrac{g_2}{4}X-\dfrac{g_3}{4}$ which is $\dfrac{1}{16}(g_2^3-27g_3^2)\propto\Delta$ ($\Delta$ is the modular discriminant), is a square or not of the ground field (if it is not a square then the Galois group is $S_3$). By $SL(2,\Z)$ modularity of $\Delta$,  if it is a square of an element of the ground field $\C(E_2,E_4,E_6)$ then its square root must belong to $\C[E_2,E_4]$. This square root must be a modular form of weight $6$ for $SL(2,\Z)$, that is a constant multiple of $E_6$. This is impossible because $\Delta$ is a cusp form and $E_6$ is not.\end{proof} 
\section{Algebraic solutions of the Riccati equation on $\BCc$: invariant theory of binary forms}
\label{drach}
In this section we investigate the following question: for which "potential" $q\in\C(\wp,\wp^\prime)$ does the Riccati equation
$$\dfrac{dY}{dz}+Y^2=q$$
admit an algebraic solution over the differential field $\BCc$? We will discover that this is given by the equianharmonic Weirstrass function $\wp_0$ corresponding to the case $g_2=0$.

First of all we recall some notions from classical invariant theory \cite{olv1995}.
Let $K$ be a field of characteristic zero. 
A binary form is by definition a homogeneous polynomial
$$ f(x,y)= a_0x^n+ \binom n1 a_1x^{n-1}y+ \binom n2 a_2x^{n-2}y^2+\cdots +a_ny^n$$
with coefficients in $K$, $n$ is the degree of the form $f$. Another used notation is 
$$ f(x,y)= (a_0, a_1, \cdots a_n)(x,y)^n.$$  On the the $K$-vector space $V_n$ of binary forms of degree $n$ the group $GL(2, K)$  acts in the following way:
\begin{equation*}
       \displaystyle g= \left(\begin{array}{cc}
           a  &  b \\
           c &  d\\
 \end{array}\right)\  \in GL(2, K),\, f \in V_n,\,\,\, (gf) (x,y)= f(ax+ by, cx+ dy).
\end{equation*}
We introduce the following important differential operator acting on polynomials of four variables $x$, $y$, $x^\prime$, $y^\prime$ over $K$
$$C:= \frac{\partial ^2}{\partial x \partial y'}- \frac{\partial ^2}{\partial x' \partial y}.$$
It is known as Cayley's process. 

For two given binary forms $Q(x,y), R(x,y)$, their transvectant of degree $r$ is the polynomial
$$ \left( Q, R \right)^{r}= \sum_{i= 0}^r (-1)^i \frac{ \partial ^r Q}{ \partial ^{r-i}x \partial ^i y}
\frac{ \partial ^r R}{ \partial ^i x \partial ^{r-i} y}.$$
The transvectant of degree $r$ can be obtained from Cayley's process using the following formula
$$\left(Q, R\right)^r= C^r\{Q(x,y)\cdot R(x',y')\}_{\vert x'=x,\, y'=y}.$$
For example: $$ \left( Q, R \right)^{1}= Q_x R_y - Q_y R_x, \quad \left( Q, R \right)^{2}= Q_{xx} R_{yy}- 2 Q_{xy} R_{xy}+ Q_{yy} R_{xx}.$$
For polynomials of one variable $F$, we  consider the projective coordinate $\displaystyle p= \frac{x}{y}$ and define the new polynomial
$\displaystyle Q(x,y)= y^n F(\frac{x}{y})$. It follows that if $ F$, $G$ are general polynomials of degrees $n$, $m$ respectively, the $r^{th}$ transvectant of $F$, $G$ is, for $r\leq \min(m, n)$ given by
$$ \left( F, G \right)^{r}= \sum_{i= 0}^r  (-1)^k \binom{r}{k}\frac{(m-k)!}{(m-r)!}\,
\frac{(n-r+k)!}{(n-r)!}\,\frac{d^{r-k} F}{dp^{r-k}}\,\frac{d^{k} G}{dp^{k}}.$$
It is of degree $\leqslant m+ n- 2r$. A few examples are given by
\begin{eqnarray*}
\left( F, G \right)^{0}&= &FG,\\
 \left( F, G \right)^{1}&=& mF_p G- nFG_p, \\
\left( F, G \right)^{2}&= &m(m-1) F_{pp}G- 2(m-1) (n-1) F_p G_p +n(n-1) FG_{pp}.
\end{eqnarray*}
Also one defines the Hessian by  
$$H(F)= \frac{1}{2} \left( F, F \right)^{2}= n(n-1) \left(F F_{pp}- \frac{n-1}{n}F_p^2\right).$$
We need in the sequel the fourth transvectant which will play an important role
\begin{equation}
\begin{split}
\tau_4(F):=\dfrac{1}{2}\left( F, F \right)^{4}&= (m-3)(m-2) ( m(m-1) F^{(4)}F- 4(m-3)(m-1) F^{(3)}F'\\
&+ 3(m-3)(m-2) {F''}^2).\end{split}
\end{equation}
\noindent In particular the fourth order differential equation corresponding to the vanishing of the fourth transvectant is
\begin{equation} \label{vanishing}
m(m-1) F^{(4)}F- 4(m-3)(m-1) F^{(3)}F'+ 3(m-3)(m-2) {F''}^2= 0.
\end{equation}
\noindent Following \cite[p. 193-194]{olv1995} we remark that the equation \eqref{vanishing} can be reduced to generalized Chazy equation which is a third order differential equation. For if $R$ is given by $ -m R= \dfrac{F'}{F}$, then an easy calculation gives
\begin{equation}
\begin{split}
\displaystyle F'&= -n R F\\
F''&= \left(-nR'+ n^2 R^2\right)F\\
F^{(3)}&= \left(-nR'' + 3n^2 RR'- n^3 R^3\right) F\\
F^{(4)}&= \left(-nR^{(3)} + 3n^2 {R'}^2 + 4n^2 R R'' - 6n^3 R'R^2 + n^4 R^4\right) F.
\end{split}
\end{equation}
\begin{lemma}
With the previous notation
$\tau_4(F)=0$ becomes the generalized Chazy equation for $R$
\begin{equation}
\label{stupido}
 R^{(3)}- 12 R R'+ 18 {R'}^2= \frac{6n^2}{n-1} \left( R'- R^2\right)^2.
 \end{equation}
\end{lemma}
\noindent On the other hand the coefficients of the fourth transvectant $\tau_4(f)$  can be computed recursively from those of the binary form $f$ \cite{brioschi1877}. 
\begin{lemma}
For $f(x,y)= (a_0, a_1, a_2, \cdots,a_n)(x,y)^n$ we have 
$$\tau_4(f)= (\alpha _0,\alpha _1, \alpha _2, \cdots,\alpha _m)(x,y)^m,\, m\leqslant2(n-4)$$ 
with
\begin{equation}
        \begin{split}
        \alpha_0 &= a_0 a_4- 4 a_1 a_3+ 3a_2^2\\
	    m_1 \alpha_1&= (n-4) (a_0a_5- 3a_1a_4 + 2a_2a_3)\\
	    m_2\alpha_2&= \frac{(n-4)(n-5)}{2}(a_0 a_6- 4a_1 a_5 + 7 a_2 a_4 -4 a_3^2)+ (n-4)^2(a_1 a_5 -4 a_2a_3 +2a_3^2)\\
	    \cdots&=\cdots\\
	    \alpha_m&= a_na_{n-4} -4 a_{n-1} a_{n-3}+ 3  a_{n-2} ^2\\
	    m_{m-1} \alpha_{m-1}&=  (n-4) (a_n a_{n-5} -3 a_{n-1} a_{n-4}+ 2a_{n-2} a_{n-3}).
\end{split}
 \end{equation}
 \end{lemma}
More generally depending on the parity of $r$ we have:
$$\displaystyle m_r \alpha _r=\sum_0^{\frac{r}{2}} p_{r,s} P_{r,s};\quad m_r \alpha _r=\sum_0^{\frac{r-1}{2}} p_{r,s} P_{r,s}$$
where
\begin{equation}
  m_r= \binom mr,\quad p_{r,s}= \binom {n-4}s \binom {n-4}{r-s},
\end{equation}
and
\begin{equation}
P_{r,s}=  a_s a_{r-s+4}- 4a_{s+1} a_{r-s+3}
 + 6a_{s+2} a_{r-s+2}- 4a_{s+3} a_{r-s+1}+ a_{s+4} a_{r-s}.
\end{equation}

\noindent We now define on the ring ${\BCc}[u]$ the following differential operator which will play a mayor role in the sequel
\begin{eqnarray} \nonumber
X:{\BCc}[u]
 &\longrightarrow&
{\BCc}[u]\\
f
&\longmapsto&
\dfrac{\partial f}{ \partial z} + (q-u^2)\frac{\partial f}{ \partial u}.
\end{eqnarray}
\begin{proof}[Proof of Theorem 2]
Let $\Phi(x)\in {\BCc}[x]$ of degree $n$ be the minimal polynomial of a solution of the Riccati equation: $Y^\prime+Y^2=q$. That is
\begin{equation}
\label{minimal poly}
\Phi(u)\, =\, u^n+ \frac{n}{1!}a_1u^{n-1}+\frac{n(n-1)}{2!}a_2u^{n-2}+\ldots+a_n=0.
\end{equation}
Then taking the derivation of the previous equation with respect to $z$ and using the Riccati equation $u^\prime+u^2=q$, one sees $X(\Phi(u))=0$ (we treat the variables $z$ and $u$ as independent ones). Hence
\begin{equation}
\label{poldar}
X(\Phi) \, =\, n(a_1 - u)\Phi.
\end{equation}
\begin{definition}
Let $K$ be a field of characteristic zero, $K[X_1\cdots,X_n]$ the ring of polynomials over it and $D$ a derivation on $K[X_1,\cdots,X_n]$. An element $P$ of $K[X_1,\cdots,X_n]$ is a called a Darboux polynomial for $D$ if there exists $R\in K[X_1,\cdots,X_n]$:
$$D(P)=RP.$$
When $R\equiv 0$ $P$ is called a first integral. $R$ is called a Darboux cofactor. Two Darboux polynomials having the same cofactor give rise to a first integral.
\end{definition}
So $\Phi(u)$ is a Darboux polynomial for the above mentioned derivation, ie a non trivial polynomial eigenvector of the derivation
$$X=\dfrac{\partial}{ \partial z} + (q-u^2)\dfrac{\partial}{\partial u}.$$
\begin{remark}
It is perhaps worthwhile to observe that in general, a function u algebraic over a differential field $(L,\delta)$ of characteristic zero verifies a linear differential equation with coefficients in L. In fact if
$u^n+a_1u^{n-1}+\cdots+a_n=0, a_i\in L,\,1\leqslant i\leqslant n$ is the minimal polynomial of f over L, then necessarily
$nu^{n-1} + (n -1)a_1 u^{n-2}+\cdots+ a_{n-1}­ 0$ and $u^\prime\in L(u)$ (one sees this last by taking the derivative of $u^n+a_1u^{n-1}+\cdots+a_n=0$). Hence $u^{\prime\prime},u^{(3)},\cdots \in L(u)$. The dimension of the the $L$-vector space $L(u)$ is $n$ and therefore the vectors $u^{(n)},\,u^{(n-1)},\cdots, u$ of $L(u)$ are linearly dependent over the field $L$, that is to say u is a solution of an homogeneous linear differential equation.
\end{remark}
\begin{remark}
\label{mee}
As our base field $K=\BCc$ is a field of meromorphic functions, the existence of a non-trivial first integral for $X$, $g(u)$, is equivalent to $g(u)$ being constant along the solutions. Therefore the general solution of our Riccati equation is given in this case by a relation (true only on the solution curve) 
$$g(u)=\mathfrak{C}$$
$\mathfrak{C}$ an arbitrary constant belonging to $\C$. Moreover two first integrals for $X$ can not be functionally independent as we know from the Cauchy theorem for differential equations, that the general solution of a first order equation depend only on one arbitrary parameter. 
\end{remark}
We have the following well-known lemma
\begin{lemma}[\cite{darboux1878,weil1995}]
\label{dar}
Let $(K,\delta)$ be a differential field and let $u^\prime=b_0+b_1u+b_2u^2$ be a Riccati differential equation over $K$. A non trivial polynomial $P(u)$ in $K[u]$ is a Darboux polynomial for the derivation $\delta + (b_0+b_1u+b_2u^2)\dfrac{\partial }{ \partial u}$ if and only if all its roots are solutions of the given Riccati equation \end{lemma}
The identity (\ref{poldar}) yields for the coefficients of $\Phi(u)$ the following relations
\begin{equation}
\begin{split}
\label{me}
 (n-1)a_2 &= na_1^2 - a_1 ' -q \\
 (n-2)a_3 &= na_1 a_2 - a_2 '- 2a_1q \\
(n-3)a_4 &= na_1 a_3 - a_3 '- 3a_2q \\
  \cdots & \cdots \cdots \cdots \cdots \cdots \\
  (n-k)a_{k+1}&= na_1 a_k - a_k '- ka_{k-1}q.
\end{split}
\end{equation}
This shows at least that if it happens that $ a_1, q$  belong to some
differential subring $L\subset K$ then  so do the other coefficients $a_k$.\\
\noindent We consider the homogeneous polynomial
\begin{eqnarray*}
K(u_1, u_2) = u_2^n \Phi(\frac{u_1}{u_2}) = \sum _{k=0}^n C_n^k a_k u_1^{n-k} u_2^k
\end{eqnarray*}
and its Hessian
 \begin{eqnarray*}
         \left|\begin{array}{cc}
            \displaystyle  \frac{\partial^2 K}{ \partial u_1^2}  &  \displaystyle  \frac{\partial^2 K}{ \partial u_1 \partial u_2} \\
	   \,                                   &     \,                                           \\
         \displaystyle  \frac{\partial^2 K}{ \partial u_1 \partial u_2} &  \displaystyle  \frac{\partial^2 K}{ \partial u_2 ^2}  \\
          \end{array}\right|\  = (n-1)u_{2} ^{2n-4} \left( n\Phi \Phi''-(n-1)\Phi'{ ^2} \right).
	  \end{eqnarray*}
	  The polynomial $H= n\Phi \Phi ''- (n-1){\Phi'}^2 \,\,\, ('=
          \frac{\partial}{ \partial u})$ is of degree $\leqslant2(n-2)$. For latter use we have the following important property
\begin{lemma}
Let $K$ be field of characteristic zero and $\Phi(u)=u^n+ \frac{n}{1!}a_1u^{n-1}+\frac{n(n-1)}{2!}a_2u^{n-2}+\ldots+a_n$ an irreducible polynomial then $H(\Phi)\not\equiv 0$. 
\end{lemma}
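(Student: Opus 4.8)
The plan is to argue by contradiction, converting the identity $H(\Phi)\equiv 0$ into a divisibility relation in $K[u]$ that is incompatible with the separability forced by irreducibility in characteristic zero. Throughout I write $'=\partial/\partial u$ and assume $n=\deg\Phi\geq 2$; note that for $n=1$ one has $\Phi''=0$ and $n-1=0$, so $H\equiv 0$ trivially, which is why the statement is only meaningful for $n\geq 2$.

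First I would record two elementary facts that hold because $K$ has characteristic zero. Since the leading term of $\Phi$ is $u^n$, the leading term of $\Phi'$ is $nu^{n-1}$ with $n\neq 0$ in $K$, so $\Phi'\neq 0$ and $\deg\Phi'=n-1$. Moreover, an irreducible polynomial over a field of characteristic zero is separable: as $\Phi$ is irreducible and $\Phi'\neq 0$ has strictly smaller degree, $\Phi$ cannot divide $\Phi'$, whence $\gcd(\Phi,\Phi')=1$ in $K[u]$. Since $K[u]$ is a principal ideal domain, coprimality is inherited by powers, so $\gcd\bigl(\Phi,(\Phi')^2\bigr)=1$ as well.

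Now suppose, for contradiction, that $H(\Phi)=n\Phi\Phi''-(n-1)(\Phi')^2\equiv 0$, that is
$$n\,\Phi\,\Phi''=(n-1)\,(\Phi')^2 \quad\text{in } K[u].$$
The left-hand side is divisible by $\Phi$, hence so is the right-hand side; and since $n-1$ is a nonzero element of $K$, this yields $\Phi\mid(\Phi')^2$. Combined with $\gcd\bigl(\Phi,(\Phi')^2\bigr)=1$, this forces $\Phi$ to be a unit of $K[u]$, contradicting $\deg\Phi=n\geq 2$. Therefore $H(\Phi)\not\equiv 0$.

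I expect the only points requiring care to be the bookkeeping around the scalar $n-1$ and the low-degree exclusions: the argument genuinely uses $n\geq 2$ (so that $n-1\neq 0$ and the cofactor cannot be absorbed) and characteristic zero (so that both $\Phi'\neq 0$ and separability hold). As a conceptual cross-check one may observe that $H/\Phi^2=n(\Phi'/\Phi)'+(\Phi'/\Phi)^2$, so with $g=\Phi'/\Phi$ the hypothesis $H\equiv 0$ is equivalent to $ng'+g^2=0$; inspecting the double pole of $g$ at a root of multiplicity $m$ gives $m(m-n)=0$, forcing $\Phi$ to be the $n$-th power of a single linear factor. This is precisely the classical statement that a binary form has identically vanishing Hessian exactly when it is a perfect power of a linear form, which is impossible for an irreducible $\Phi$ of degree $\geq 2$. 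Either route closes the proof.
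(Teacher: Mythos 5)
Your proof is correct, and there is in fact nothing in the paper to compare it against: the lemma is stated there as an ``important property'' for later use and no proof is supplied, so your argument fills a genuine omission rather than paralleling an existing one. Your main route is the most economical correct one: in characteristic zero, irreducibility of $\Phi$ gives separability, hence $\gcd(\Phi,\Phi')=1$ and then $\gcd\bigl(\Phi,(\Phi')^2\bigr)=1$, while $H(\Phi)\equiv 0$, i.e.\ $n\Phi\Phi''=(n-1)(\Phi')^2$, would force $\Phi\mid(\Phi')^2$ (using $n-1\neq 0$); this is a contradiction for $\deg\Phi\geq 2$. The hypotheses you isolate are exactly the ones needed: characteristic zero (for $\Phi'\neq 0$ and separability, and for $n-1$ to be invertible) and $n\geq 2$, and you are right that the statement is literally false for $n=1$, where $H\equiv 0$ automatically -- a caveat the paper never makes explicit, though harmless in its context where the degenerate low-degree cases are treated separately. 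Your cross-check via $H/\Phi^2=n\,(\Phi'/\Phi)'+(\Phi'/\Phi)^2$ and the double-pole computation $m(m-n)=0$ at a root of multiplicity $m$ is the classical proof that a binary form has identically vanishing Hessian precisely when it is a power of a single linear form; this second route is conceptually the one closest to the paper's framework, since the $H$ of the lemma is, up to the nonzero factor $(n-1)u_2^{2n-4}$, the Hessian determinant of the binary form $u_2^n\Phi(u_1/u_2)$ computed just above the lemma, and the same ``vanishing covariant forces a degenerate form'' phenomenon recurs later in the quoted Brioschi--Wedekind theorem on the fourth transvectant. Either of your two arguments stands on its own.
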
 
\begin{proof}
This is because a polynomial with vanishing Hessian factorizes into a product of linear terms, \cite[prop. 5.3, p. 71]{kung}
\end{proof}          
Moreover the partial differential operator $X=\dfrac{\partial f}{ \partial z} + (q-u^2)\dfrac{\partial f}{ \partial u}$ is a derivation 
	  \begin{equation*}
	  X(fg) \, =\, fX(g) + gX(f),\, f,\,g\,\in\C(\wp,\wp^\prime)[u]
	  \end{equation*}
with the following commutation relation 
	   \begin{equation*}
	   [ X, \frac{\partial}{ \partial u} ] \, =\, 2u \frac{\partial}{ \partial u}.
	   \end{equation*}
Hence 
	   \begin{align*}
\begin{split}
	   X(\Phi') &= -n \Phi + (na_1-nu+2u)\Phi' \\
	   X(\Phi'') &= -2(n-1) \Phi' + (na_1-nu+4u)\Phi'' \\
	   X(\Phi \Phi'') &= -2(n-1) \Phi \Phi' + (na_1-nu+2u)\Phi \Phi'' \\
	   X({\Phi'}^2) &= 2\Phi' \left(-n\Phi + (na_1-nu+2u)\Phi' \right).
\end{split}
	   \end{align*} 
and
	   \begin{equation}\label{3'}
	   X(H) = 2 (na_1-nu+2u)H.
	   \end{equation}
The Jacobian of $\Phi$ and $H$ is
	   \begin{eqnarray*}
         \left|\begin{array}{cc}
           \dfrac{\partial \Phi}{ \partial z}  &  \dfrac{\partial \Phi}{ \partial u}  \\
	   \,                                   &     \,\\
           \dfrac{\partial H}{ \partial z} &   \dfrac{\partial H}{ \partial u} \\
          \end{array}\right|\  =  
         \left|\begin{array}{cc}
           X(\Phi)  &  \Phi_u^{'} \\
            X(H) &   H_u^{'} \\
          \end{array}\right|\ = na_1( \Phi H^{'}- 2H \Phi^{'})-u\Omega.
	  \end{eqnarray*}
with
	 $\Omega \, =\,  n\Phi H'- 2(n-2)H \Phi' .$
In a similar way we obtain
	   \begin{equation}
	   \label{omeg}
	   X(\Omega) = 3(na_1-nu+2u)\Omega
	   \end{equation}
and finally
	   \begin{equation*}
	  3 X(H)\Omega=2HX\Omega
	   \end{equation*}
 Or
	  \begin{equation}\label{3}
	   \frac{\Omega^2}{H^3} \, = \Gamma
	  \end{equation}
where $\Gamma$ is a constant. 
Taking the Jacobian of $\Phi$ and $\Omega$ we get 
	   \begin{eqnarray*}
      \displaystyle \left|\begin{array}{cc}
           \dfrac{\partial \Phi}{ \partial z}  &  \dfrac{\partial \Phi}{ \partial u}  \\
	   \,                                 &        \,                            \\
           \dfrac{\partial \Omega}{ \partial z} &   \dfrac{\partial \Omega}{ \partial u} \\
          \end{array}\right|\  =  
         \left|\begin{array}{cc}
           X(\Phi)  &  \Phi_u ' \\
            X(\Omega) &   \Omega_u ' \\
          \end{array}\right|\ = na_1( \Phi \Omega'- 3\Omega \Phi')-u\Omega_1.
	  \end{eqnarray*}
 with  $\Omega_1 \, =\,  n\Phi \Omega'- 3(n-2)\Omega \Phi' $ .
This gives	 
         \begin{equation} \label{6'}
	   X(\Omega_1) = 4(na_1-nu+2u)\Omega_1.
	   \end{equation}
The two identities  \eqref{3'}, \eqref{6'} give
	    \begin{equation*}
	   2\Omega_1X(H) = X(\Omega_1) H
	   \end{equation*}
that is with another constant $a$
	   \begin{equation*}
	   \frac{\Omega_1}{H^2} \, =\, a.
	  \end{equation*}
 As we said before in remark \ref{mee} two non trivial first integrals are functionally dependent and this should give differential relation for our $\Phi(u)$. Indeed one has the following
 \begin{eqnarray*}
	  H &=& n\Phi \Phi''- (n-1){\Phi'} ^2 \\
	   H' &=& n\Phi \Phi'''- (n-2)\Phi'  \Phi''\\
	   \Omega  &=& n \Phi H' -2(n-2) H \Phi' \\
	       &=& n^2\Phi^2 \Phi''' - 3n(n-2)\Phi \Phi' \Phi''+2(n-1)(n-2) {\Phi'}^3 \\
	       \Omega'  &=& -n(n-6)\Phi \Phi' \Phi'''+3(n-2)^2 {\Phi'}^2 \Phi''- 3n(n-2)\Phi {\Phi''} ^2 + n^2 \Phi^2 \Phi^{''''} \\
	       \Omega_1 &=& -4n^2(n-3)\Phi^2 \Phi^{'} \Phi^{'''}-3n^2(n-2)\Phi^2 {\Phi''}^2 +n^3\Phi^3 \Phi^{''''}  + 12n(n-2)^2\Phi {\Phi'}^2 \Phi''\\
	       \,&-& 6(n-1) (n-2)^2{\Phi'}^4\\
	     aH^2 &=& a\left(n^2{\Phi}^2{\Phi^{''}}^2 -2n(n-1)\Phi {\Phi^{'}}^2 \Phi^{''} +(n-1)^2{\Phi^{'}}^4  \right).  
	  \end{eqnarray*}
In conclusion  splitting the two members of the equality $\Omega_1=aH^2$ into monomials involving only powers of $\Phi^\prime$ on one side and the rest on the other side, one sees that $\Phi$ should divide $\Phi^\prime$ unless $a(n-1)^2= -6(n-1)(n-2)^2$ or $\displaystyle a= -6\frac{(n-2)^2}{n- 1}$. Moreover the divisibility of $\Omega_1-aH^2$ by $\Phi^2$ gives the vanishing of the fourth transvectant 
\begin{equation}
\begin{split}	
\displaystyle \tau_4 (\Phi)&= \displaystyle \frac{n-1}{n^2} \frac{\Omega_1 -
  aH^2}{\Phi^2}\\ 
 &= n(n-1)\Phi \Phi^{(4)}-4(n-1)(n-3)\Phi^{'}
 \Phi^{'''}+3(n-2)(n-3){\Phi^{''}}^2\\ 
& = 0.
\end{split}
	 \end{equation} 

 \end{proof}
 \begin{example}
 To give an example we look at the case of $n= 4$. In this case \[ a_2= -\frac{1}{3}q,\quad a_3= \frac{1}{6} q', \quad a_4= \displaystyle -\frac{1}{6}q''+ q^2,\]
 so that the potential $q$ solves an equation: $q'' =  8q^2$ similar to
 \begin{equation}\label{lame}
 u'' +2c u^2= 0.
 \end{equation}
  It has (see \eqref{equian}) for $ c\neq 0$, the solution
\begin{equation*}
u(z)= -\frac{3}{c}\wp _0 (z)
 \end{equation*}
 where $\wp_0 (z)$ is the equianharmonic Weierstrass function
\begin{equation*} 
 z= \int_\infty ^{\wp_0}\dfrac{dx}{2 \sqrt{x^3- 1}}.
 \end{equation*}
 \end{example}
For general  $n= 4, 6, 12$, the potential $q$ solves the equation $\displaystyle q''= 6a q^2$, with $ a=  \displaystyle\frac{(n-2)^2}{n-1}$. 

From lemma \ref{dar} we know that a Darboux polynomial for $X$ has necessarily all its roots satisfying the associated Riccati equation. As the degree of $H$ is $\leqslant2(n-2)$, we have to examine the cases $n=2$ and $n=3$ separately as in these two cases, the degree of $H(u)$ is possibly less than $n$. 
\begin{lemma}
There is no polynomial $\Phi$ of degree $2$.
\end{lemma}
\begin{proof}
Suppose that $n=2$. Then 
$$\Phi(u)=u^2+2a_1u+a_2.$$	 
One sees that $H(u)=4a_2$ using the condition $a_1=0$. But $H(u)$ is a Darboux polynomial for $X$, so \eqref{3'} $a_2^\prime=\dfrac{\partial}{\partial z}a_2=0$. This forces $a_2$ to be an element of $\C$. As $a_1=0$, $a_2\in\C$, we get a contradiction as $\Phi(u)$ is assumed to be irreducible and $\C$ is algebraically closed. Therefore the case $n=2$ for $\Phi(u)$ monic irreducible is impossible.
\end{proof}	 
\begin{lemma}
In the case $n=3$ $a_2$ does not vanish.
\end{lemma}
\begin{proof}
Indeed assume $n=3$. Then $\Phi(u)$ takes the form:
$$\Phi(u)=u^3+3a_1u^2+3a_2u+a_3$$
and $H(u)$ is $18(a_2u^2+a_3u-a_2^2)$. The previously analyzed case $n=2$ shows that if $a_2\not=0$ then $H(u)$ necessarily factors over $\BCc$. Moreover $a_2$ can not be zero, otherwise $H(u)$ becomes a polynomial of degree $0$ or $1$ and then we know that $0$ is a solution of our Riccati equation ($a_3$ can not be $0$ as $\Phi(u)$ is assumed to be irreducible) which therefore takes the form $\dfrac{du}{dz}+u^2=0$; the general solution of the latter equation is given by $u(z)=\dfrac{1}{z+C_0}\,, C_0\in\C$. We claim that such an $u$ is not algebraic over $\BCc$. Indeed if this were the case, then one would get a relation of  the form 
$$\dfrac{1}{(z+C_0)^3}+a_3=0$$
with $a_3­\not=0$ doubly periodic (minimal polynomial of degree $3$). This is impossible. Thus $a_2$ is not zero.
\end{proof} 
For the case $n=3$, the identity $\tau_4(\Phi)=0$ disappears and only the differential system \eqref{me}, with $a_1=0$ remains
 \begin{equation}
\begin{split}
 2a_2 &= -q \\
 (n-2)a_3 &= - a_2 '\\
0&= -a_3^\prime- 3a_2q
\end{split}
\end{equation}
which gives $q^{\prime\prime}=3q^2$ which is also of the above form with $a=\dfrac{(3-2)^2}{(3-1)}=\dfrac{1}{2}$.	 
\begin{remark}
The appearance of the potential $q,\,q''= 6a q^2 $ or its solution, the anharmonic Weirstrass $\wp$-function denoted $\wp_0$, is one the unifying point of this study.  
\end{remark}
\section{Link with Hypergeometric Functions and Conclusion}
\label{section5}
We consider the precedent differential equation for the potential $q$
\begin{equation}\label{11}
q^{''} = 6a q,\, a= \frac{(n-2)^2}{n-1}
\end{equation}
with the solution $q(z)= \dfrac{1}{a} \wp_0(z)= \dfrac{1}{a}\wp(z;\, g_2=0,\, g_3)$ and we would like to make a connection with some known facts on the algebraic hypergeometric functions \cite{hille1967}. For commodity reason  we set $g_3=4\lambda^6$ and the Riccati equation for $u$ becomes  
\begin{equation}\label{12}
\frac{dY}{dz}+ Y^2 = q(z) =\frac{1}{a}\wp_0(z).
\end{equation}
By making the change of variable $X= \wp_0(z)$ we obtain
\begin{equation}\label{13}
 \left(\frac{dX}{dz}\right)^2= 4X^3- g_3 = 4(X^3- \lambda^6)
\end{equation}
and if we set
\begin{equation}\label{14}
X= \lambda^2 \xi,\,\,\,\lambda z= t,\,\,\,Y= \lambda w
\end{equation}
 the equation \eqref{12} becomes
\begin{equation}\label{15}
\frac{dw}{dt}+ w^2= \frac{\xi}{a},\,\,\, \xi= \xi(t)
\end{equation}
and the equation  \eqref{13}  transforms into
\begin{equation}\label{16}
\displaystyle \left(\frac{d\xi}{dt}\right)^2= 4(\xi ^3- 1),\quad 2dt= \frac{d\xi}{\sqrt{\xi^3-1}}.
\end{equation}
\noindent Now in the equation \eqref{15} we make the change of function and variable
\begin{equation}\label{17}
t= \theta(\xi),\, \, \, v= w\theta^{'}- \frac{\theta ^{''}}{2 \theta^{ '}}
\end{equation}
which transforms a Riccati equation
\begin{equation}\label{18}
\frac{dw}{dt}+ w^2=  R(t)
\end{equation}
into another Riccati equation 
\begin{equation}\label{19}
\frac{dv}{dt}+ v^2=  r(\xi)= {\theta^{'}}^2 R(\theta) - \frac{1}{2}\{\theta, x \},
\theta^{'}= \frac{dt}{d\xi}
\end{equation}
so that \eqref{19} is a kind of Darboux-Backl{\"u}nd transformation for Riccati equations. We thus obtain a simpler form for the 
equation \eqref{12} and
\eqref{15}
\begin{equation}\label{20}
\frac{dv}{d\xi}+ v^2=  \frac{\xi}{16} \frac{ c_0 \xi ^3 -4c_1}{(\xi ^3- 1)^2}
\end{equation}
with
$$c_0 = \frac{4}{a}- 3,\, c_1 = \frac{1}{a}+ 6.$$
One can show that this last equation reduces to the desired form
\begin{equation}\label{21}
\frac{dW}{ds}+ W^2= Q(s)= \frac{1}{4}\left(\frac{\lambda ^2- 1}{s^2}+
\frac{\nu ^2 -1}{(1- s)^2}+ \frac{\lambda ^2 - \mu ^2 + \nu ^2-
1}{s(1-s)}\right)
\end{equation}
with
\begin{equation*}
\lambda= \frac{1}{3},\,\,\, \mu= \frac{n}{6(n- 2)},\,\,\, \nu= \frac{1}{2}.
\end{equation*}
\noindent Before stating the conclusion, we would like to insist on the major role of the hypergeometric function in this study. We use classical notations and recall essential facts of  the theory of Fuchs and Schwarz. The most general form of the hypergeometric equation is
\begin{equation}\label{22}
y^{''}+ \frac{(2-\lambda- \mu)x+ \lambda- 1 }{x(1- x)}y'+ \frac{(1- \lambda-
\mu)^2- \nu^2 }{4x(1- x)}y= 0.
\end{equation}
We introduce the new constants $a, b, c$ connected to the local exponents and their inverses by
$$\begin{array}{lclcl}
a+b+c= 1- \lambda- \mu,\quad a-b= \nu,\quad c = 1-\lambda ,\\
\mu_0= 1-c,\quad \mu_1=  c- a- b,\quad \mu_{\infty}= b-a\\
k_0= \dfrac{1}{\mu_0}, \quad k_1= \dfrac{1}{\mu_1},\quad k_{\infty}= \dfrac{1}{\mu_{\infty}}.
\end{array}
$$
 The associated Schwarzian equation is
\begin{equation}
-4\{s, x\}= \frac{1- \mu_0^2}{x^2(1-x)}+ \frac{1- \mu_1^2}{x(1-x)^2}-
\frac{1- \mu_{\infty}^2}{x(1-x)}.
\end{equation}
If $(k_0, k_1, k_{\infty})$ is one of the triplets $(2,2,m), \,(2,3,3),\,(2,3,4),\,(2,3,5)$, the monodromy group of the hypergeometric equation is of finite order $N= 2m$ (dihedral), $N=  12$ (tetrahedral), $N= 24$ (octahedral), $N= 60$ (icosahedral), with
$$ \frac{2}{N}= \frac{1}{k_0} + \frac{1}{k_1}+ \frac{1}{k_{\infty}}-1.$$ 
The main conclusion is that the cases for which the considered Riccati equation has algebraic solutions correspond exactly to those cases for which the hypergeometric equations has algebraic solutions. These  correspond to platonic solids or regular polyhedra.

This is a general fact, indeed \cite{bald1979,bald1980}, following Klein, have shown that any linear second order differential equation with regular singularities on an algebraic curve with a full set of algebraic solutions, is the pull-back via a rational map of a particular set of hypergeometric differential equations, called the basic Schwarz list. We refer to the mentioned references for more details. 

In our context the fact that the degree of the minimal polynomial of the found Riccati equations is $\geqslant3$ implies that each of the Riccati equations has only algebraic solutions. Thus the associated second equation has a finite projective monodromy, because given two linearly independent solutions $y_1,\,y_2$ of the associated second order equation, if we let $u_1:=\dfrac{y^\prime_1}{y_2}$ respectively $\displaystyle u_2:=\frac{y^\prime_2}{y_2}$ and denote the general solution of the Riccati by $u$, then $\dfrac{y_1^2}{y_2^2}=\propto\dfrac{(u-u_2)^2}{(u-u_1)^2}$ (see \cite[p. 30]{painl1895}). Hence the ratio of two independent solution of $y^{\prime\prime}+qy=0$ is algebraic and this is known to be equivalent to the finiteness of the projective monodromy group \cite{bald1979,bald1980}.

 We finish by a  remark but first we recall well-known facts on modular curves \cite{edwards2004}: the modular function $J: \mathfrak{H} \mapsto \mathbf{P}^1$ gives a quotient map with respect to the projective group $\PSL$. It ramifies above the points $0, 1$ with ramification indices $3$ and $2$ respectively. The group $$\Gamma(m)= \{M \in SL(2,\Z),\,M=  \Id\mod m\}$$ is a normal subgroup of $SL(2,\Z)$ and has no elliptic elements for $m\geqslant2$. If $Y(m)$ denotes the quotient of $\mathfrak H$ by $\Gamma(m)$, the cover $ {\mathfrak H} \mapsto Y(m)$ is unramified and the modular function $J$ factors over $Y(m)$, $J: Y(m) \mapsto \BC$. The modular curve $X(m)$ is the completion of $Y(m)$ obtained by adding to $Y(m)$ the finitely many cusps so that $J: X(m) \mapsto \mathbf{P}^1$. This map ramifies above  $\infty$ with order $m$. Hence the ramifications indices  above $0, 1, \infty$ are $3,2,m$ respectively. The covering group, preserving and acting transitively on the fibres, is $\CL$. When $m= 3, 4, 5$ we recover the tetrahedral, octahedral and icosahedral coverings.

In the previous theorem, the vanishing of the fourth transvectant appeared as one of the conditions for a  solution of a Riccati equation to be algebraic. In the analysis of this vanishing, the next theorem is of great importance and it is due to Brioschi and Wedekind.
\begin{proposition}[\cite{brioschi1877,edwards2004}]
\label{iff}
 \noindent Let $K$ be an algebraically closed field of characteristic zero. The fourth transvectant $\tau_{4}(f)$ of a non-trivial binary form $f$ of order $k \geq 4$ is identically zero if and only if $f$ is $GL(2, K)$-equivalent to one of the following forms
\begin{enumerate}
\item $x_1^k$ or $x_1^{k-1}x_2$ \rm{(degenerate case)}
\item $x_2(x_1 ^3 + x_2 ^3)$ \rm{(tetrahedral case)}
 \item $x_1 x_{2} \left(x_1 ^{4}+ x_2 ^4\right)$ \rm{(octahedral case)}
 \item $x_1 x_2 \left(x_1 ^{10}- 11 x_1 ^5 x_2 ^5 - x_2 ^{10} \right)$ \rm{(icosahedral case)}.
\end{enumerate}
\end{proposition}
The main point here is that the vanishing of the fourth transvectant of a non-degenerate binary form $f$ forces it to be one of the Klein forms. 
\section{Remarks and Questions}
\label{section6}
There are various way one could possibly extend the study realized in this paper. First we may consider the case of homogeneous linear third order or $n$-th order
$$y^{(n)}+a_1y^{(n-1)}+\cdots+a_ny=0,$$
$n>2$ over $(\mathbb{C}(z),\delta)$, $\delta(z)=1$. Using the substitution $u=\dfrac{y^\prime}{y}$ one reduces the homogeneous linear differential equation to a non-linear differential equation of order $n-1$. Does there exist anharmonics in this case? Moreover does the Darboux formalism in the section \ref{drach} of this paper generalize to the case of Riccati equations on the field of functions of an algebraic curve over $\C$ (the case we studied is essentially the case of elliptic curves)? Finally one possible further generalization of the Darboux formalism may occur through the study of the general Abel differential equation
$$\dfrac{dy}{dz}=y^3+q$$
with $q$ in some differential function field. 
 
\end{document}